\documentclass{amsart}
\usepackage[T1]{fontenc}
\usepackage[latin9]{inputenc}
\usepackage{amsmath,amsthm,amsfonts,amscd,amssymb,eucal,latexsym,mathrsfs}
\usepackage{stmaryrd}
\usepackage{enumerate}
\usepackage{hyperref}

\usepackage{tikz,tikz-cd}
\usetikzlibrary{shapes.geometric}
\usepackage{tqft}

\usetikzlibrary{positioning}
\usepackage{float}
\usepackage{MnSymbol}
\usetikzlibrary{matrix}

\makeatletter
\renewcommand\part{\@startsection {part}{1}{\z@}%
                                   {-3.5ex \@plus -1ex \@minus -.2ex}%
                                   {2.3ex \@plus.2ex}%
                                   {\newpage\centering\normalfont\bfseries}}
\makeatother

\theoremstyle{plain}
\newtheorem{theorem}{Theorem}

\newtheorem{corollary}[theorem]{Corollary}
\newtheorem{lemma}[theorem]{Lemma}
\newtheorem{proposition}[theorem]{Proposition}

\theoremstyle{definition}
\newtheorem{remark}[theorem]{Remark}

\newtheorem{definition}[theorem]{Definition}

\newcommand{\acts}{\curvearrowright}
\newcommand{\G}{\Gamma}

\newcommand{\e}{\varepsilon}
\newcommand{\h}{\mathfrak{h}}

\newcommand{\IR}{\mathbb{R}}
\newcommand{\CI}{\mathbb{C}}

\newcommand{\ZI}{\mathbb{Z}}
\newcommand{\IN}{\mathbb{N}}

\newcommand{\M}{\mathrm{M}}

\DeclareMathOperator{\tr}{\mathrm{tr}}

\DeclareMathOperator{\Sym}{\mathrm{Sym}}
\DeclareMathOperator{\Aut}{\mathrm{Aut}}

\DeclareMathOperator{\impl}{\Rightarrow}
\DeclareMathOperator{\inj}{\hookrightarrow}

\DeclareMathOperator{\Map}{\mathrm{Map}}

\newcommand{\U}{\mathrm{U}}

\renewcommand{\Im}{\mathrm{Im}}

\DeclareMathOperator{\MS}{\mathrm{MS}}
\DeclareMathOperator{\CMS}{\mathrm{CMS}}

\newcommand{\sF}{\mathbf{F}}
\newcommand{\sG}{\mathbf{G}}
\newcommand{\sT}{\mathbf{T}}
\newcommand{\sD}{\mathbf{D}}

\newcommand{\sU}{\mathbf{U}}
\newcommand{\sB}{\mathcal{B}}
\newcommand{\sP}{\mathcal{P}}
\newcommand{\sQ}{\mathcal{Q}}
\newcommand{\AP}{\mathrm{AP}}

\renewcommand{\h}{h}

\newcommand{\sa}{\mathrm{sa}}
\renewcommand{\Re}{\mathrm{Re}}

\title{Universal entropy invariants}

\author{Robert Graham}
\author{Mikael Pichot}

\begin{document}

\begin{abstract}
We define  entropy invariants for abstract algebraic structures 
using an asymptotic Boltzmann formula.    
\end{abstract}

\maketitle

For an abstract algebraic structure $A$ (by which we mean a set with operations as understood in universal algebra)  we examine two (families of) numerical invariants associated with $A$, with values in $\{-\infty\}\cup [0,\infty]$, the entropy $h(A)$, and a related Minkowski dimension invariant, the entropy dimension $\dim(A)$.

The definitions rely on  the Boltzmann  formula 
``$h=k\log W$'' for computing the entropy
and  depend on the choice of spaces of microstates that approximate $A$ using simpler (usually finite or finite dimensional) asymptotic models $A_r$ for $A$.

Several concepts of entropy have been defined for structures such as groups,  dynamical systems, $C^*$-algebras, etc., using a Boltzmann type formula and which have obvious common patterns in their formulation. Many of these invariants  can be seen as specializations of  ``$\h$'' or ``$\dim$''.   In concrete situations, the invariants ``$\h$'' and ``$\dim$'' can be shown to coincide with known invariants such as the  Boltzmann/Shannon entropy, the Kolmogorov--Sinai entropy, Bowen's sofic entropy, the sofic dimension, Voiculescu's free entropy dimension, or various topological versions of entropy. 
We discuss this  further in  \textsection 4.   

The precise definition of $h(A)$ and $\dim(A)$  is given  in \textsection\ref{s-def universal} below.  
Roughly speaking, if we are given a tolerance of approximation $T$ for the  structure $A$ and $W_T$ denotes the ``size'' of the microstate space of $A$ at tolerance $T$, then the entropy is given by the standard  expression $k_T\log W_T$ where $k_T$ is a normalization constant (chosen appropriately as a function of  $T$). 
The invariants $h(A)$ and $\dim(A)$ are  obtained by a  limiting process as the tolerance $T$ shrinks
 (the limit over the  asymptotic models  $A_r$ being taken first). 

For the sake of applications, we will assume that  $A$ is endowed with an ``abstract functional analytic structure'', by which we mean that $A$ and $A_r$ have ``state symbols'' and  a metric with respect to which the operations and states are uniformly continuous. The latter assumption  is in line with the recent developments in continuous first order  logic (cf.\ \textsection \ref{s logic}) and it holds for the standard functional analytic structures.  The above approximation process defining entropy involves the states and the metric on $A$ and $A_r$.

The invariants $h(A)$ and $\dim(A)$ are local or ``finitary''  in the sense that they approximate the structure  $A$ on  finite subsets. These subsets are considered part of  the tolerance $T$ and are eventually limited over. The size $W_T$ of the microstate spaces is estimated using an $\e$ packing number, and $\e$ is also part of $T$.

The main result of the paper is the following ``Kolmogorov--Voiculescu invariance theorem''. It shows that $\h$ and $\dim$ satisfy a universal invariance property, and can be seen as a result in ``universal functional analysis''.

\begin{theorem}\label{th} The entropy $\h(A)$ is a topological invariant of $A$ and the  entropy dimension
$\dim(A)$  an algebraic invariant of $A$. 
\end{theorem}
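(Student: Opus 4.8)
The plan is to reduce both assertions to a single comparison principle for microstate spaces and then to isolate the different roles played by the metric in the two invariants. Write $\Phi\colon A\to B$ for the relevant isomorphism — a bijective homomorphism that is in addition a uniform homeomorphism in the first assertion, and merely an algebraic isomorphism in the second. First I would record the functoriality of the microstate construction: a homomorphism respecting the state symbols sends an approximate model of $A$ on a finite set $F$ (a tuple in some $A_r$ approximately satisfying the operation identities and the state values read off $F$) to an approximate model of $B$ on $\Phi(F)$, at a tolerance given by a predictable reparametrization $T\mapsto\Phi_\ast T$. The content of the first assertion is then that this reparametrization, although it distorts every coordinate of the tolerance (the finite set $F$, the operation and state precisions, and the packing scale $\e$), is cofinal in the directed set of tolerances and asymptotically neutral for the normalized quantity $k_T\log W_T$.

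The key technical step is a two-sided packing comparison. Since $\Phi$ and $\Phi^{-1}$ are uniformly continuous, with moduli $\omega$ and $\eta$ respectively, an $\e$-separated family of microstates for $A$ is carried to an $\eta(\e)$-separated family of microstates for $B$, and symmetrically in the other direction; hence the packing numbers satisfy $W^{B}_{\Phi_\ast T}\ge W^{A}_{T}$ together with a matching reverse inequality. Because the normalization $k_T$ is chosen to depend only on the data that $\Phi$ preserves up to these moduli, the ratio $k_{\Phi_\ast T}/k_T$ tends to $1$ along the tolerances, so the distortion introduced by $\omega$ and $\eta$ is washed out in the limit. Here I would use crucially that the limit over the asymptotic models $A_r$ is taken first: the push-forward yields only a containment of microstate spaces at nearby tolerances, and one sandwiches $W^B$ between two values of $W^A$, a move that becomes legitimate only after the inner $\limsup_r$ has been performed and the outer refinement of $T$ exploits the cofinality of $\{\Phi_\ast T\}$.

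For the entropy dimension the additional ingredient is that the defining ratio carries a $\log(1/\e)$ in its denominator (it is a Minkowski-type exponent), and this second logarithm absorbs not merely multiplicative but power-law distortions of scale. I would first prove that $\dim(A)$ is independent of the particular metric compatible with the functional analytic structure: replacing $\e$ by $\omega(\e)$ alters $\log(1/\e)$ by a term that is negligible against $\log(1/\e)$ for the admissible moduli, so different compatible metrics return the same exponent. Granting this, an arbitrary algebraic isomorphism $\Phi$ is upgraded for free: transport the metric of $A$ to $B$ by setting $d'_B=d_A\circ(\Phi^{-1}\times\Phi^{-1})$, which makes $\Phi$ an isometric isomorphism so that $\dim(B,d'_B)=\dim(A,d_A)$ tautologically, and then invoke metric-independence to identify $\dim(B,d'_B)$ with $\dim(B,d_B)$. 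This is precisely the mechanism that promotes $\dim$ from a topological to an algebraic invariant.

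The step I expect to be the main obstacle is the interchange of limits hidden in the second paragraph: the comparison of microstate spaces is genuinely a relation between distinct tolerances and never an equality, so the argument must show that the double limit ``$\limsup_r$, then refine $T$'' is insensitive to the reparametrization $T\mapsto\Phi_\ast T$. Making the cofinality uniform across the multi-parameter tolerance — simultaneously for $F$, the operation and state precisions, and $\e$ — and verifying that the chosen $k_T$ is robust enough to annihilate the moduli $\omega$ and $\eta$ is where the real work lies. For the dimension the companion obstacle is pinning down exactly which class of moduli the $\log$-$\log$ normalization can absorb, since this is what dictates that $\dim$ can be an algebraic invariant only when the operations are continuous in the appropriate sense.
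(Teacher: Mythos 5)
Your proposal proves (or attempts to prove) a different statement from the one the paper means, and the gap is not repairable within your framework. As the paper explains immediately after Theorem \ref{th}, and makes precise in Theorems \ref{T-algebraic gen} and \ref{T - top independence theorem}, the content is independence of the invariants from the choice of \emph{generating set} inside a single structure $A$, with the metric, the states, the asymptotic models $A_r$ and the normalizations $N,L$ all held fixed: $\dim(G_1)=\dim(G_2)$ whenever $G_1^{\sT}=G_2^{\sT}$, and $\h(G_1)=\h(G_2)$ whenever $G_1,G_2$ both topologically generate $A$. Your formulation via an isomorphism $\Phi\colon A\to B$ cannot yield this: for two generating sets $G_1\neq G_2$ of the same $A$ (a partition versus a dynamically generating refinement in the Kolmogorov--Sinai case, or $G$ versus all of $G^{\sT}$ in the Voiculescu case) there is no isomorphism carrying one onto the other, and the whole difficulty is to compare $\sup_{E\ll G_1}$ with $\sup_{E\ll G_2}$. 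The paper does this by writing each element of a finite $E'\ll G_1$ as a term $t(s_1,\ldots,s_n)$ in elements of $G_2$ (or approximating it by such a term, in the topological case) and then showing by a maximal-separated-implies-dense counting argument that an $\eta(\e)/2$-separated set of microstates observed on $E=\{s_1,\ldots,s_n\}$ is $(n+1)\e$-dense when observed on $E'$ (Lemmas \ref{L - alt invariance Voicu lemma} and \ref{L - Indep theorem lemma}). Note that the modulus $\eta\in\sU$ appearing there belongs to the \emph{term operations evaluated in $A_r$}, not to any map between structures. Indeed your transport mechanism rests on a misidentification of where the metric enters: the metric $d_E$ on microstate spaces is computed entirely inside $A_r$, so the reindexing $\sigma\mapsto\sigma\circ\Phi^{-1}$ preserves $\e$-separation exactly and the moduli of $\Phi$ are irrelevant to packing; where the metric of $A$ does enter (the $\delta$-contractivity condition, or $|d(a,b)-d(\sigma(a),\sigma(b))|\leq\delta$ when $d\in\sG$) a merely uniformly bicontinuous $\Phi$ destroys the condition, because it is an additive tolerance: a two-point structure carrying the metrics $d$ and $2d$ already gives $\h(A)\neq\h(B)$, while an isometric state-preserving $\Phi$ makes your statement a tautology.

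The dimension step fails quantitatively as well. Your claim that the $|\log\e|$ normalization absorbs power-law distortions of scale is false: if $\eta(\e)\sim\e^{1/\alpha}$ then $L(\eta(c\e)/2)/L(\e)\to 1/\alpha\neq 1$ for $L=|\log|$, so the Minkowski-type exponent is multiplied by a nontrivial factor; Minkowski dimension is a bi-Lipschitz invariant, not a H\"older or topological one. This is precisely why Theorem \ref{T-algebraic gen} carries the explicit \emph{hypothesis} $\lim_{\e\to 0}L(\eta(c\e)/2)/L(\e)=1$ for every $\eta\in\sU$ and $c\in(0,1)$ --- an assumption tying $L$ to the moduli of the structure (satisfied when the operations are Lipschitz on domains, as for tracial von Neumann algebras), not a fact one can prove in general. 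Consequently your route ``transport the metric along $\Phi$, then invoke metric-independence of $\dim$'' collapses, since the metric-independence it requires does not hold at this level of generality. Finally, the obstacle you single out --- interchanging $\limsup_r$ with the refinement of tolerances --- is not where the work lies: in the paper all packing-number comparisons, such as $N_{E',2(n+1)\e}\leq N_{E,\eta(\e)/2}$, are established for each fixed $r$ and fixed $(F,R,\delta)$, after which the limit in $r$ and the infima are taken using the monotonicity built into the definitions (Lemma \ref{L-techniques}).
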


In other words, if $A$ is algebraically generated by a subset $G$  then $\dim(A)=\dim(G)$, and if $A$ is topologically generated by $G$ (in the sense of Definition \ref{D- top gen}) then $\h(A)=\h(G)$. The precise statement  has an additional assumption on the normalization constants $k_T$ (see Theorem \ref{T-algebraic gen} and Theorem \ref{T - top independence theorem}). Even when $A$ is given as a specific model of a theory  there can still be a range of choices for the  microstates spaces which lead to  (a priori) distinct ways to measure the entropy in $A$.

Theorem \ref{th} is a further generalization of well--known results, starting with Kolmogorov's invariance of entropy under conjugacy for dynamical systems. But it also includes very recent results like the invariance under conjugacy of Bowen's sofic entropy or the invariance under isomorphism (orbit equivalence) of the sofic dimension for probability measure preserving equivalence relations. If $A$ is a finite von Neumann algebra then $\dim(A)$ is a technical modification of  Voiculescu's modified free entropy dimension $\delta_0$. The topological invariance of $\dim(A)$ is an open problem. This is expected to be hard if true as (just as for $\delta_0$) proving invariance would solve the free group factor isomorphism problem (using the Haagerup--Thorbj\o rnsen theorem, see \textsection\ref{S-applications}).  
The above theorem extends this type of ideas to fairly general algebraic structures, requiring that suitable asymptotic models exist. This makes the invariants $h$ and $\dim$ comparable in scope to  classical invariants such as the Betti numbers  in homological algebra.

 \setcounter{tocdepth}{1} 
\tableofcontents

\section{Definitions}\label{S-def}

\subsection{Macrostates}\label{s logic} We work within the framework of continuous logic as developed in the paper of Farah, Hart, and Sherman \cite{FHS}.

 Let $A$ be an algebraic 
structure with function symbols 
\[
\sF=\{f:A^{n}\rightarrow A,\ n\in\mathbb{Z}_{\geq0}\text{ the arity}\}
\]
(see e.g.\ \cite[Chapter 2]{BS}). Note that this contains constants.

Define the set $\sT$ of terms  to be the functions
obtained from finite iterations of elements of $\sF$ (if $f\in \sF$ is $n$-ary and $t_1,\ldots, t_n\in \sT$ are terms then $f(t_1,\ldots, t_n)$ is a term).
For $F\subset A$ and $T\subset\sT$ we define $F^{T}$
to be the subset of $A$ obtained after applications of elements of
$T$ on $F$.

We will also consider some functions 
\[
\sG=\{g:A^{n}\rightarrow \IR,\ n\in\mathbb{Z}_{\geq1}\text{ the arity}\}
\]
called the state symbols. (We could replace $\IR$ with any metric space $X$ serving as a ``screen'' for our state function $g$.)

\begin{remark} We assume as usual that the language is defined first and that $A$ is  an interpretation, and omit the superscript notation for simplicity. 
\end{remark}

In addition $A$ has a metric $d$ and a family $\sD$ of domains  exhausting $A$ which are complete and bounded with respect to $d$. The metric is assumed to be uniformly continuous with respect to the structure  in the sense that 
 every $D_{1},\ldots,D_{n}\in\sD$ and and $t\in\sT\cup\sG$ of arity $n>0$ there exists $\eta\colon(0,1]\rightarrow (0,1]$ such that for every $1\leq j\leq n$ every $(a_{1},\ldots,\hat{a_{j}},\ldots,a_{n})\in D_{1}\times\cdots\times \hat{D_{j}}\times\cdots\times D_{n}$ every $a,b\in D_{j}$ and every $\e>0$ we have 
\[
d(a,b)<\eta(\e) \impl d(t(a_{1}\ldots,a_{j-1},a,a_{j+1},a_{n}),t(a_{1},\ldots,a_{j-1},b,a_{j+1},\ldots,a_{n}))<\e.
\]
Note that $\eta$ depends on $D_1,\ldots, D_n$ and $t$.  

We call (uniform) continuity symbols of $A$ elements of the set $\sU:=\{$all functions $\eta\colon (0,1]\rightarrow (0,1]$ of the form 
\[
\eta:=\min_{t\in T}\eta^{D_1,\ldots, D_n}_t
\]
for $T\subset \sT\cup\sG\text{ finite and }D_1,\ldots, D_n\in \sD\}$.

\subsection{Microstates}

Suppose $B$ has the same structure as $A$. 
Let $F\subset A$ be a finite subset and $\delta\geq0$.
A map $\sigma:F\rightarrow B$ is a \emph{$\delta$-microstate} if the following two conditions are satisfied:

\begin{itemize} 
\item for
every $t\in\sT$ and $a_{1},\ldots,a_{n},a_{n+1}\in F$ such that $a_{n+1}=t(a_{1},\ldots,a_{n})$
then 
\[
d\left(\sigma(a_{n+1}),t(\sigma(a_{1}),\ldots,\sigma(a_{n}))\right)\leq \delta
\]
($\sigma$  is  a \emph{$\delta$-morphism}) 
\item  for every $g\in\sG$ and
$a_{1},\ldots,a_{n}\in F$ we have 
\[
d(g(a_{1},\ldots,a_{n}),g(\sigma(a_{1}),\ldots,\sigma(a_{n})))\leq\delta
\]
($\sigma$  is  \emph{$\delta$-state-preserving}) 
\end{itemize}
If $R\subset \sT\cup \sG$ is a subset and the  conditions hold for every $t\in R\cap \sT$ and every $g\in R\cap \sG$ then we say that $\sigma$ is a $(R,\delta)$-microstate. 

Furthermore,  a map  $\sigma:F\rightarrow B$ is 
\begin{itemize}
\item \emph{domain preserving} if $\sigma(a)$ is in any domain that $a$ is in for every $a\in F$. 
\item \emph{$\delta$-contractive} if
\[
d(\sigma(a),\sigma(b))\leq d(a,b)+\delta
\]
for every $a,b\in F$. Alternatively, we often  require that $d\in \sG$ resulting in the stronger inequality
\[
|d(a,b)-d(\sigma(a),\sigma(b))|\leq\delta
\]
for every $a,b\in F$ and microstate $\sigma$.
\end{itemize}

Given $F\subset A$, $R\subset \sT\cup \sG$ finite and $\delta\geq 0$, let
\begin{align*}
 & \MS(F,R,\delta,B)=\{\sigma\colon F\to B\ \ \text{domain preserving } (R,\delta)\text{-microstates}\}\\
 & \CMS(F,R,\delta,B)=\{\sigma\in\MS(F,R,\delta,B)\ \delta\text{-contractive}\}
\end{align*}
Note that $\CMS\subset \MS$ in general and $\MS= \CMS$ if $d\in \sG$. If $d\notin \sG$ then $\MS$ involves only  the metric  on $B$. These are really ``approximate microstates'' in that they only involve finite portions of the structure $A$.

Given $E\subset F$  define a metric on $\MS(F,R,\delta,B)$ and
$\CMS(F,R,\delta,B)$ 
\[
d_{E}(\sigma,\tau):=\sup_{s\in E}d(\sigma(s),\tau(s))
\]
and for $\e>0$ denote $N_{E,\e}(\ )$ the maximum cardinality of
an $\e$-separated set with respect to this metric and let $\tilde{N}_{E,\e}(\ )$
be the minimum cardinality of an $\e$-dense set.

\subsection{Entropy and dimension}\label{s-def universal}

Consider now a sequence $(A_{r})_{r\geq1}$ with the same structure
as $A$ and positive functions $N(r)$ and $L(\e)$ called the respectively the (inverse) Boltzmann and the packing normalization functions.

Given finite subsets $E\subset F$  of $A$ and $R\ll \sT\cup \sG$ define 
\begin{align*}
\h(E,F,R,\delta,\e) & :=\limsup_{r\to\infty}\frac{\log N_{E,\e}(\MS(F,R,\delta,A_{r}))}{N(r)}
\end{align*}
If $E'\subset E$, $F\subset F'$, $R\subset R'$, $\delta'\leq \delta$ and $\e\leq \e'$ then $\h(E',F',R',\delta',\e')\leq \h(E,F,R,\delta,\e)$.

If $E\subset B$ where $|E|<\infty$ and $B\subset A$ is an arbitrary subset then we let
\begin{align*}
\h(E,B, \e) & :=\inf_{\delta>0}\inf_{R\ll \sT\cup\sG}\inf_{E\subset F\ll B}\h(E,F,R,\delta,\e)
\end{align*}
where ``$F\ll B$'' means finite subset. Note that if $B$ is finite then 
\begin{align*}
\h(E,B, \e) & =\inf_{\delta>0}\inf_{R\ll \sT\cup\sG}\h(E,B,R,\delta,\e).
\end{align*}

The following definition is similar in spirit to that of Kolmogorov entropy (see \textsection \ref{S - KS entropy}):

\begin{definition}
The \emph{entropy} of a subset $G$ of $A$ is defined by
\[
h(G):=\sup_{E\ll  G}\sup_{\e>0}\h(E,G^\sT,\e).
\]
\end{definition}

Then we have a Minkowski dimension type invariant associated with $h$:

\begin{definition}
The \emph{entropy dimension}  of  $G\subset A$ is  defined by
\begin{align*}
\dim(G) & :=\sup_{E\ll G}\limsup_{\e\to 0}\frac{\h(E,G^\sT,\e)}{L(\e)}.
\end{align*}
\end{definition}

This is similar to Voiculescu's modified free entropy dimension $\delta_0$ for von Neumann algebras (cf.\ \textsection\ref{S-voi}).

These invariants are numbers in $\{-\infty\}\cup[0,\infty]$.
We say that $A$
is \emph{$A_r$-approximable} if $\h(A)\geq0$, and in this case we say that $A_r$ are  \emph{asymptotic models} for $A$.
 Most known applications are covered by two cases:
\begin{enumerate}
\item the \emph{finite case}: $A_r$ is finite as a set and then $L(\e)\equiv 1$, $h(G)=\dim (G)$. 
\item the \emph{finite dimensional case}: $A_r$ is a finite dimension real vector space endowed with a (suitably normalized) Euclidean metric and then the algebraic entropy dimension $\dim(G)$ is renormalized using a logarithm $L=|\log|$.
\end{enumerate}

\begin{remark}
Gromov has suggested the construction of general metric invariants  using strings of infima/suprema in the context of  first order logic (see \cite[\textsection 3 D$_+$]{Gro-metric book} for example). In \cite{Gro-entropy2012} it is explained  how the  classical Kolmogorov invariance theorem for dynamical entropy appears from a new angle through the eyes of category  theory (see \textsection \ref{S - KS entropy} for the universal approach on the same invariant).      
\end{remark}

We can characterize $A_{r}$-approximability using ultraproducts.
In the present context, see \cite{FHS}, ultraproducts have an additional condition on domains (so for example they coincide  with the usual notion for  von Neumann algebras).  Given an index set $I$ and an ultrafilter $\omega$
on $I$ define 
\[
{\prod_{r\in I}}^0 A_r:=\{a\in\prod_{r\in I} A_{r}\mid \exists D\in\sD,\ \{r:a_r\in D\}\in\omega\}
\]
and the ultraproduct
\[
\prod_{r\to \omega} A_r:={\prod_{r\in I}}^0 A_r\Big /\{d^\omega=0\}
\]
where $d^\omega$ is the pseudo metric on ${\prod}^0 A_r$ given by
\[
d^\omega(a,b):=\lim_{r\to\omega}d(a_r,b_r)
\]
(the limit is defined as the number $x$ with the property
that for all $\epsilon>0$ the set $\{r\in I:|d(a_r,b_r)-x|<\epsilon\}$ is in $\omega$)
and $\{d^\omega=0\}$ is the equivalence relation defined by  $a\sim b\iff d^{\omega}(a,b)=0$ so $d^\omega$ descends to a metric on the ultraproduct $\prod_{r\to\omega} A_r$. Each function or state symbol $f\in\sT\cup\sG$ is interpreted in the ultraproduct using the expression  $f((a_r)_{r\in I})=(f(a_r))_{r\in I}$.
Moreover each $D\in\sD$ corresponds to the quotient
set of $\{a\in {\prod}^0 A_r\mid \{r\in I:a_r\in D\}\in\omega\}$. It is readily
seen that the functions and states inherit the same moduli of continuity
as the $A_{r}$ so the ultraproduct as the same structure as the $A_r$. Furthermore, the \L o\'s theorem shows that $\prod_{r\to \omega} A_r$
inherits all the basic properties of the $A_{r}$ \cite[page 9]{FHS}, more precisely any the value of any formula in $\prod_{r\to \omega} A_r$
is the ultralimit of the formulas for the $A_{r}$.

\begin{proposition} 
$A$ is $A_{r}$-approximable if and only if $A$ has a state preserving
morphism into an ultraproduct of the form $\prod_{r\to \omega} A_r$.
\end{proposition}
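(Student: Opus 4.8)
The plan is to convert $A_r$-approximability into a concrete nonemptiness statement about microstate spaces, and then match that statement with the existence of the morphism through the ultraproduct machinery. First I would unwind the definition of $\h(A)$. Since $N_{E,\e}(\MS(F,R,\delta,A_r))$ is a nonnegative integer and $N(r)>0$, the ratio $\log N_{E,\e}(\cdots)/N(r)$ equals $-\infty$ exactly when $\MS(F,R,\delta,A_r)=\emptyset$ and is $\geq 0$ otherwise; hence $\h(E,F,R,\delta,\e)\geq 0$ if and only if $\MS(F,R,\delta,A_r)\neq\emptyset$ for infinitely many $r$, a condition that does not depend on $E$ or $\e$. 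Using that the restriction to $F$ of an $(R,\delta)$-microstate defined on a larger finite set is again an $(R,\delta)$-microstate (so nonemptiness only becomes easier as $F$ shrinks), I would push this through the infima and suprema to show that $\h(A)\geq 0$ is equivalent to the property $(\mathrm{P})$: for every finite $F\ll A$, every finite $R\ll\sT\cup\sG$ and every $\delta>0$, the set $\{r:\MS(F,R,\delta,A_r)\neq\emptyset\}$ is infinite.

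For the implication $(\mathrm{P})\Rightarrow$ morphism I would build the ultraproduct over the directed set $\Lambda$ of approximation data $\lambda=(F,R,m)$, ordered by $F\subseteq F'$, $R\subseteq R'$, $m\leq m'$, and let $\omega$ be an ultrafilter on $I:=\Lambda$ containing every tail $\{\mu:\mu\geq\lambda\}$. For each $\lambda$, property $(\mathrm{P})$ lets me choose an index $r(\lambda)$ (taken arbitrarily large, so as to run through infinitely many values) together with $\sigma_\lambda\in\MS(F,R,1/m,A_{r(\lambda)})$, extended by a fixed basepoint off $F$. Setting $\Phi(a):=[(\sigma_\lambda(a))_\lambda]\in\prod_{\lambda\to\omega}A_{r(\lambda)}$ gives a well-defined element, since domain preservation places $(\sigma_\lambda(a))_\lambda$ in $\prod^{0}$ along the tail on which $a\in F$. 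For a term $t$ and $a_1,\dots,a_n\in A$ with $b=t(a_1,\dots,a_n)$, the microstate inequality $d(\sigma_\lambda(b),t(\sigma_\lambda(a_1),\dots,\sigma_\lambda(a_n)))\leq 1/m$ holds on the tail above $(\{a_1,\dots,a_n,b\},\{t\},m_0)$, so the $\omega$-limit vanishes and $\Phi(b)=t(\Phi(a_1),\dots,\Phi(a_n))$; the same tail argument applied to the state inequality gives $g(\Phi(a_1),\dots,\Phi(a_n))=g(a_1,\dots,a_n)$. Thus $\Phi$ is a state-preserving (and domain-preserving) morphism into an ultraproduct of the required form.

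The converse is the easier, ``\L o\'s'' direction. Given a state-preserving morphism $\Phi\colon A\to M=\prod_{r\to\omega}A_r$ with $\omega$ free, I fix representatives $\Phi(a)=[(\phi_a(r))_r]$ for the finitely many $a$ in a prescribed $F$ and set $\sigma_r(a):=\phi_a(r)$. Because $\Phi$ commutes with each $t\in R\cap\sT$ and preserves each $g\in R\cap\sG$, the interpretation of these identities in the ultraproduct says precisely that the ultralimits of the defects $d(\sigma_r(b),t(\sigma_r(a_1),\dots,\sigma_r(a_n)))$ and $|g(\sigma_r(a_1),\dots)-g(a_1,\dots)|$ are $0$; together with domain preservation, each of the finitely many required inequalities holds on a set in $\omega$, so their intersection $S\in\omega$. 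Since $\omega$ is free, $S$ is infinite, whence $\sigma_r\in\MS(F,R,\delta,A_r)$ for infinitely many $r$, which is $(\mathrm{P})$.

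The main obstacle is the forward direction, and specifically the choice of index set: for uncountable $A$ one cannot in general realize the whole diagram of $A$ inside an ultraproduct indexed by $\IN$ (a saturation phenomenon), which is why I index by the directed poset $\Lambda$ and use its tail ultrafilter, and why $r(\lambda)$ must be arranged to take infinitely many values so that the target is genuinely ``of the form'' $\prod_{r\to\omega}A_r$ and so that the converse returns infinitely many $r$. Everything else — checking that the $\leq\delta$ microstate and state defects vanish in the $\omega$-limit — is routine bookkeeping about which finite datum $(F,R,\delta)$ controls which term or state.
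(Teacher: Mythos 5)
Your proof is correct and takes essentially the same route as the paper's: you index an ultraproduct by the directed set of approximation data with an ultrafilter containing all tails, let the product of chosen microstates descend to the state-preserving morphism, and recover microstates from coordinate representatives (projections) for the converse. Your write-up is in fact slightly more careful than the paper's one-paragraph proof --- you carry the finite set $R$ of symbols in the index set, check membership in ${\prod}^0$, and spell out the converse, which the paper dismisses as ``clear'' --- but these are refinements of the same argument, not a different approach.
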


The  morphism is injective if for example $d\in \sG$. 

\begin{proof}
Suppose that $A$ is $A_{r}$-approximable, by definition for any $F\ll A$
and $\delta>0$ we can find a $\delta$-microstate $\sigma_{(F,\delta)}\colon F\to A_{r}$ for some $r=r(F,\delta)$.
Let $I=\{(F,\delta):F\ll A,\ \delta=\frac{1}{n},\ n\in \IN\}$. There exists a non principal
ultrafilter $\omega$ on $I$ that contains all sets of the form
$\{(F',\delta')\in I:F\subset F'\ll A,\ \delta'<\delta\}$ for any
$F\ll A$ and $\delta>0$. Extending $\sigma_{(F,\delta)}$ arbitrarily to a map $A\to A_r$ the  product map
\[
A\ni  a\mapsto \prod_{(F,\delta)\in I}\sigma_{(F,\delta)}(a)\in {\prod}_{(F,\delta)\in I}^0 A_{r(F,\delta)}
\]
descend to a state preserving morphism $A\to \prod_{(F,\delta)\to \omega} A_{r(F,\delta)}$. The converse is clear by looking at the projection maps.
\end{proof}

\begin{remark}
In general the value of entropy may be altered when the $\limsup$ is replaced by a $\liminf$ in the definitions of $\h$ and $\dim$. A subset $G$ is said to be $\h$-regular if $\h(G)=\underline \h(G)$ where $\underline \h$ involves a $\liminf$ and $\dim$-regular if $\dim(G)=\underline \dim(G)$ where $\underline \dim$ involves a $\liminf$ twice. One can also take ultrafilters. Theorem \ref{T-algebraic gen} and \ref{T - top independence theorem} a valid when using a $\liminf$ in particular   the notion of regularity  depends on $A$ and not on the particular generating set $G$.   
\end{remark}

The following lemma records some simple but useful technical manipulations. From the metric/entropy  point of view these manipulations are trivial in the sense that they do not alter the value of $\e$. 

\begin{lemma} \label{L-techniques} 
\begin{enumerate}
\item If $B\subset A$ is arbitrary, $E,E'\ll B$ and $\e\geq 0$ then
\[
\h(E,B,\e)=\inf_{E\cup E'\subset F\ll B}\h(E,F,\e).
\]
\item If $B\subset A$ is arbitrary, $E,E'\ll B$ are finite, $\e\geq 0$,  and $B\subset B_0\subset B^\sT$ is arbitrary
\[
\h(E,B^\sT,\e)=\inf_{E\cup E'\subset F\ll B_0}\h(E,F^\sT,\e)
\]
\item 
 If $G\subset A$ is finite then 
\[
\dim(G)=\limsup_{\e\rightarrow0}\frac{\h(G,G^\sT,\e)}{L(\e)}.
\]
\end{enumerate}
\end{lemma}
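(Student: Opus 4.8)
The plan is to derive all three identities from two structural facts already in hand: the monotonicity of $\h(E,F,R,\delta,\e)$ recorded after its definition, and the commutativity of iterated infima. Two consequences will be used repeatedly. First, for fixed finite $E$ the quantity $\h(E,B,\e)$ is monotone decreasing in $B$, since enlarging $B$ only enlarges the family of admissible $F$ in the defining infimum and hence lowers its value. Second, $\h(E,B,\e)$ is monotone increasing in $E$: for any admissible $F\supseteq E$ the stated fixed-$(R,\delta)$ monotonicity gives $\h(E',F,R,\delta,\e)\le\h(E,F,R,\delta,\e)$ whenever $E'\subset E$, and this inequality survives the infima over $F$, $R$ and $\delta$.

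For part (1), unwinding the definition of $\h(E,B,\e)$ and commuting the three infima (legitimate since $F$ there ranges over finite sets, so that $\h(E,F,\e)=\inf_{\delta}\inf_{R}\h(E,F,R,\delta,\e)$) gives $\h(E,B,\e)=\inf_{E\subset F\ll B}\h(E,F,\e)$. The content is then that restricting $F$ to contain $E\cup E'$ leaves the value unchanged. One inequality is immediate, the restricted infimum being taken over a subfamily. For the reverse, given any $F$ with $E\subset F\ll B$ I pass to $F\cup E'$, which contains $E\cup E'$ and satisfies $\h(E,F\cup E',\e)\le\h(E,F,\e)$ by decreasing monotonicity in the set argument; hence the restricted family is cofinal and the two infima agree.

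Part (2) runs along the same lines but with term closures, and I expect this to be the step needing care. After commuting infima, the inequality $\h(E,B^\sT,\e)\le\inf_{F}\h(E,F^\sT,\e)$ holds because every finite $F\ll B_0$ satisfies $F^\sT\subset(B^\sT)^\sT=B^\sT$ (using $B_0\subset B^\sT$ and idempotency of the closure), so decreasing monotonicity applies. The reverse inequality is the crux. I start from an arbitrary triple $(F',R,\delta)$ appearing in the infimum defining $\h(E,B^\sT,\e)$, so that $E\subset F'\ll B^\sT$. Each element of the finite set $F'$ is a term applied to finitely many elements of $B$, so altogether only finitely many elements of $B$ and finitely many terms occur; adjoining these elements together with $E\cup E'$ produces a finite $F\ll B_0$ with $F'\subset F^\sT$. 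Using $F'$ itself, with the same $R$ and $\delta$, as test data in the infimum defining $\h(E,F^\sT,\e)$ gives $\h(E,F^\sT,\e)\le\h(E,F',R,\delta,\e)$, and passing to the infimum over all $(F',R,\delta)$ yields $\inf_{E\cup E'\subset F\ll B_0}\h(E,F^\sT,\e)\le\h(E,B^\sT,\e)$, completing the equality.

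Part (3) is the most direct. Because $G$ is finite, $E=G$ is itself one of the indices in $\sup_{E\ll G}$, so $\dim(G)\ge\limsup_{\e\to0}\h(G,G^\sT,\e)/L(\e)$. Conversely, increasing monotonicity in $E$ gives $\h(E,G^\sT,\e)\le\h(G,G^\sT,\e)$ for every $E\subset G$ and every $\e$; dividing by $L(\e)>0$, taking $\limsup_{\e\to0}$, and then the supremum over $E$ preserves this inequality, so the supremum is attained at $E=G$. The main obstacle across the whole lemma is the bookkeeping in part (2): one must verify that a finite subset of the generally infinite closure $B^\sT$ is always absorbed into the closure $F^\sT$ of a single finite $F\ll B_0$, which is precisely the observation that each term application uses only finite data.
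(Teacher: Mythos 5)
Your proof is correct and matches the paper's argument in all essentials: part (1) by enlarging $F$ to $F\cup E'$ and using monotonicity, part (2) by absorbing a finite subset of $B^{\sT}$ into $F^{\sT}$ for a single finite $F\ll B_0$ built from the finitely many elements of $B$ and terms involved, and part (3) by monotonicity in $E$ (which the paper leaves unproved as immediate). The only cosmetic difference is that in (2) you work directly with the triples $(F',R,\delta)$ and pass to infima, where the paper uses a $\gamma$-almost-minimizing finite set; these are interchangeable.
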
 \begin{proof} 
\begin{enumerate}
\item ``$\leq$'' is clear, for the other direction note that for any
$E\subset F\ll B$ then $E\cup E'\subset F\cup E'\ll B$ and
since $E'\cup F\supset F$ we have 
\[
\h(E,F,\e)\geq \h(E,E'\cup F, \e)
\]
so ``$\geq$'' follows. 
\item ``$\leq $'' is clear since $F^T\ll B^\sT$ for every $F\ll B_0$ and every $T\ll \sT$.
Conversely let $\gamma>0$ and let $E\subset F\ll B^{\sT}$
 such that 
\[
\h(E,F, \e)\leq \h(E,B^\sT,\e)+\gamma
\]
Then there is $F_{0}\subset B$ and $T\subset\sT$ finite such that
$F\subset F_{0}^{T}$. We have $E'\cup E\subset E'\cup F_{0}^T$ and
\[
\h(E,E'\cup F_0^T,\e)\leq \h(E,F_{0}^T,\e)\leq\h(E,F,\e)
\]
so ``$\geq$'' follows letting $\gamma\to 0$. 
 
\end{enumerate}
\end{proof}

\section{The algebraic invariance of $\dim$}\label{S - top invariance}

The notation is as in \textsection\ref{S-def}. In this section we do not have that $d\in \sG$ a priori and we work with $\MS$.

\begin{theorem}\label{T-algebraic gen} Let $G_{1},G_{2}$ subsets
of $A$ such that $G_{1}^{\sT}=G_{2}^{\sT}$. For every approximation $(A_r)$ and every inverse Boltzmann normalization function $N$ we have
\[
\dim(G_{1})=\dim(G_{2})
\]
provided that the normalization function $L$ satisfies 
\[
\lim_{\e\to0}\frac{L({\eta(c\e)}/2)}{L(\e)}=1
\]
(convergence is assumed) for every function $\eta\in \sU$ and $c\in (0,1)$.
\end{theorem}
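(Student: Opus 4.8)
The two quantities $\dim(G_1)$ and $\dim(G_2)$ are built from the \emph{same} ambient set $B := G_1^{\sT}=G_2^{\sT}$ occupying the slot $\h(E,G_i^{\sT},\e)$; the only difference is that $E$ ranges over finite subsets of $G_1$ in the first and over finite subsets of $G_2$ in the second. By symmetry it is therefore enough to prove $\dim(G_1)\le\dim(G_2)$. The plan is to fix $E\ll G_1$ and replace it by a comparable finite subset of $G_2$: since $E\subset G_1\subset G_1^{\sT}=G_2^{\sT}$, each $a\in E$ can be written $a=t_a(b_1^a,\ldots,b_{n_a}^a)$ with $t_a\in\sT$ and $b_j^a\in G_2$. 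Collecting these over the finitely many $a\in E$ yields a finite $E'\ll G_2$ and a finite $T\ll\sT$ with $E\subset (E')^{T}$; write $n$ for the largest arity occurring in $T$.

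The heart of the argument is that a microstate, being an approximate morphism, has its values on $E$ almost determined by its values on $E'$. Fix $R\ll\sT\cup\sG$ with $R\supset T$ and $F\ll B$ with $F\supset E\cup E'$, and take $\sigma,\tau\in\MS(F,R,\delta,A_r)$. For $a=t(\vec b)\in E$ with $\vec b\in E'$ the triangle inequality gives
\[
d(\sigma(a),\tau(a))\le d\bigl(\sigma(a),t(\sigma\vec b)\bigr)+d\bigl(t(\sigma\vec b),t(\tau\vec b)\bigr)+d\bigl(t(\tau\vec b),\tau(a)\bigr),
\]
where the outer terms are $\le\delta$ because $\sigma,\tau$ are $\delta$-morphisms for $t\in R$. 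For the middle term I would assemble, from the coordinatewise uniform-continuity moduli and over the finite set $T$, a single modulus $\eta\in\sU$ (this is exactly what $\sU$ is for), so that $d(\sigma(b_j),\tau(b_j))<\eta(\e_0/n)$ for all $j$ forces $d(t(\sigma\vec b),t(\tau\vec b))<\e_0$, obtained by changing one coordinate at a time; domain preservation keeps every telescoped intermediate argument inside the domains on which $\eta$ is valid. Taking the supremum over $a\in E$ and contraposing, any $\e$-separated set for $d_E$ is $\eta((\e-2\delta)/n)$-separated for $d_{E'}$, whence
\[
N_{E,\e}\bigl(\MS(F,R,\delta,A_r)\bigr)\le N_{E',\eta((\e-2\delta)/n)}\bigl(\MS(F,R,\delta,A_r)\bigr).
\]

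Dividing by $N(r)$ and taking $\limsup_r$ turns this into $\h(E,F,R,\delta,\e)\le\h(E',F,R,\delta,\eta((\e-2\delta)/n))$ for all admissible $\delta$, $R\supset T$, $F\supset E\cup E'$. I would then push this through the infima defining $\h(\,\cdot\,,B,\cdot)$. Using the monotonicity recorded after the definition of $\h(E,F,R,\delta,\e)$ (smaller $\delta$, larger $R$, larger $F$, and larger separation all decrease $\h$), the infimum defining $\h(E',B,\,\cdot\,)$ is unchanged if one restricts to $\delta\le\e/4$, $R\supset T$ and $F\supset E\cup E'$, since these form cofinal ranges. For such $\delta$ one has $\eta((\e-2\delta)/n)\ge\eta(\e/(2n))\ge\eta(c\e)/2$ with $c:=1/(2n)\in(0,1)$, so combining $\h(E,B,\e)\le\h(E,F,R,\delta,\e)$ with the displayed inequality and then taking the restricted infimum on the right gives
\[
\h(E,B,\e)\le\h\bigl(E',B,\eta(c\e)/2\bigr).
\]
The delicate point here is orienting the infima correctly: the right-hand separation depends on the very parameter $\delta$ over which one infimizes, and the restriction $\delta\le\e/4$ is what reconciles the two.

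Finally I would feed this into the dimension. Writing $s(\e):=\eta(c\e)/2$, which tends to $0$ as $\e\to0$ since $\eta$ is a genuine modulus, one has
\[
\frac{\h(E,B,\e)}{L(\e)}\le\frac{\h(E',B,s(\e))}{L(s(\e))}\cdot\frac{L(\eta(c\e)/2)}{L(\e)},
\]
and the hypothesis $\lim_{\e\to0}L(\eta(c\e)/2)/L(\e)=1$ forces the last factor to $1$. Because $s(\e)\to0$, the $\limsup$ of the first factor is at most $\limsup_{\e'\to0}\h(E',B,\e')/L(\e')\le\dim(G_2)$. Hence $\limsup_{\e\to0}\h(E,B,\e)/L(\e)\le\dim(G_2)$ for every $E\ll G_1$, and taking the supremum over $E$ gives $\dim(G_1)\le\dim(G_2)$; exchanging $G_1$ and $G_2$ yields the reverse inequality. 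I expect the main obstacle to be the continuity/domain bookkeeping of the second paragraph—manufacturing one modulus $\eta\in\sU$ valid for the whole finite term-set $T$ and ensuring every telescoped intermediate point stays in an admissible domain—rather than the infimum juggling or the final limit, which are routine once the packing estimate $N_{E,\e}\le N_{E',\eta(\cdots)}$ is in hand.
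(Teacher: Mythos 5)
Your proposal takes essentially the same route as the paper's proof (its Lemmas \ref{L - alt invariance Voicu lemma} and \ref{L-alg inv eps lemma}): express each element of $E\ll G_1$ as a term over a finite $E'\ll G_2$, use the $\delta$-morphism property plus a telescoped uniform-continuity estimate to compare packing numbers of the \emph{same} microstate space in the two metrics $d_E$ and $d_{E'}$, push the resulting inequality through cofinally restricted infima to get $\h(E,B,\e)\le\h(E',B,\eta(c\e)/2)$, and conclude by dividing by $L(\e)$ and invoking the hypothesis on $L$. The only structural difference is in the packing step: the paper shows that a maximal separated set measured on the $G_2$-side is automatically dense measured on the $G_1$-side and passes through covering numbers, whereas you map separated sets on the $G_1$-side to separated sets on the $G_2$-side directly; both are legitimate.

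However, one step does not follow as written. You need $\eta((\e-2\delta)/n)\ge\eta(\e/(2n))$ for $\delta\le\e/4$, which silently assumes $\eta$ is nondecreasing. The paper's framework does not grant this: the moduli $\eta_t^{D_1,\ldots,D_n}$ are arbitrary functions $(0,1]\to(0,1]$, hence so are the elements of $\sU$, and you cannot simply pass to the monotone envelope $\tilde\eta(\e):=\sup_{\e'\le\e}\eta(\e')$ (a perfectly good modulus) because the hypothesis $\lim_{\e\to0}L(\eta(c\e)/2)/L(\e)=1$ is assumed only for $\eta\in\sU$, and $L$ is merely a positive function, so the limit need not transfer to $\tilde\eta\notin\sU$. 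The source of the trouble is that you let $\delta$ creep into the argument of $\eta$; note also that fixing a single value such as $\delta=\e/4$ is not enough, since to bound $\h(E',B,\cdot)$ from below by your chain you must have the packing inequality for a cofinal set of $\delta$'s. The repair is pure bookkeeping, and it is exactly what the paper does (its ``wma $2\delta<\e$''): fix the $G_2$-side scale at $\eta(c\e)$ with $c:=1/(2n)$ from the outset, so that $d_{E'}(\sigma,\tau)<\eta(c\e)$ telescopes to a middle term $<n\cdot c\e=\e/2$, and with $\delta\le\e/4$ conclude $d_E(\sigma,\tau)<2\delta+\e/2\le\e$; contraposing gives $N_{E,\e}\le N_{E',\eta(c\e)}\le N_{E',\eta(c\e)/2}$ for every $\delta\le\e/4$, with $\delta$ never entering the argument of $\eta$. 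After this substitution the rest of your argument (the infima handling and the final limit, including the implicit use of $\eta(c\e)\to0$, which the paper also assumes) goes through verbatim.
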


\begin{lemma} \label{L - alt invariance Voicu lemma} Assume that $G_{1}^{\sT}=G_{2}^{\sT}$ and let $E'\subset G_{1}$ be finite.
Then there are $E\subset G_{2}$, $c\in (0,1)$ and $\eta\in \sU$ such that for every $\e>0$ and  every $F$ finite such that $E\cup E'\subset F\subset G_{1}^{\sT}=G_{2}^{\sT}$
\[
\h(E',F^\sT,\e)\leq\h(E,F^\sT,{\eta(c\e)}/2).
\]
\end{lemma}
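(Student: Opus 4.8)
The plan is to reduce the inequality to a single comparison of packing numbers of microstate spaces, obtained by pushing the defect of a microstate through the terms that express $E'$ over $G_2$. Since $G_1^\sT=G_2^\sT$, every $a\in E'$ can be written $a=t_a(b_1^a,\ldots,b_{n_a}^a)$ with $t_a\in\sT$ and $b_i^a\in G_2$. I set $E=\{b_i^a : a\in E',\ 1\le i\le n_a\}\subset G_2$ and $T_0=\{t_a : a\in E'\}\subset\sT$, let $n=\max(1,\max_a n_a)$, put $c=\tfrac{1}{n+1}\in(0,1)$, and take $\eta=\min_{t\in T_0}\eta_t^{\cdots}\in\sU$ to be the minimum of the uniform-continuity moduli of the $t_a$ relative to fixed domains of $\sD$ containing $E$. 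These data depend only on $E'$ and on the structure, not on $\e$ or $F$.

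Expanding both sides through $\h(\,\cdot\,,F^\sT,\,\cdot\,)=\inf_{\delta,R,F_\star}\h(\,\cdot\,,F_\star,R,\delta,\,\cdot\,)$, it suffices to dominate, for each right-hand configuration $(\delta,R,F'')$ with $E\subset F''\ll F^\sT$, the left-hand infimum by $\h(E,F'',R,\delta,\eta(c\e)/2)$. I choose on the left $R'=R\cup T_0$, $F'=F''\cup E\cup E'$ (so that $E'\subset F'\ll F^\sT$), and $\delta'=\min(\delta,(1-nc)\e/2)>0$. Since $F''\subset F'$, $R\subset R'$ and $\delta'\le\delta$, restriction to $F''$ gives a well-defined map $\rho\colon\MS(F',R',\delta',A_r)\to\MS(F'',R,\delta,A_r)$.

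The crux is the estimate that for $\sigma,\tau\in\MS(F',R',\delta',A_r)$,
\[
d_E(\sigma,\tau)\le\eta(c\e)/2 \impl d_{E'}(\sigma,\tau)<\e.
\]
For each $a\in E'$, the triangle inequality together with the $(R',\delta')$-microstate relation $a=t_a(b_1^a,\ldots,b_{n_a}^a)$ bounds $d(\sigma(a),\tau(a))$ by $2\delta'+d\big(t_a(\sigma b_1^a,\ldots),t_a(\tau b_1^a,\ldots)\big)$. As $d(\sigma b_i^a,\tau b_i^a)\le\eta(c\e)/2<\eta(c\e)\le\eta_{t_a}(c\e)$, applying uniform continuity one coordinate at a time — legitimate because microstates are domain preserving, so every intermediate tuple lies in the prescribed domains and the common moduli of the $A_r$ apply — bounds the middle term strictly by $n_a c\e\le nc\e$; the choice of $\delta'$ then gives $d(\sigma(a),\tau(a))<\e$ (a constant $a$ is handled directly by $d(\sigma(a),\tau(a))\le2\delta'<\e$). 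Hence $\rho$ carries a maximal $\e$-separated set for $d_{E'}$ to an $\eta(c\e)/2$-separated set for $d_E$ injectively, so $N_{E',\e}(\MS(F',R',\delta',A_r))\le N_{E,\eta(c\e)/2}(\MS(F'',R,\delta,A_r))$ for all $r$; taking $\log(\cdot)/N(r)$ and $\limsup_r$ gives $\h(E',F',R',\delta',\e)\le\h(E,F'',R,\delta,\eta(c\e)/2)$, and the infimum over $(\delta,R,F'')$ completes the argument.

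I expect the telescoping continuity estimate to be the main obstacle: one has to organize the coordinate-by-coordinate use of the moduli, verify that all intermediate arguments remain inside the domains of $\sD$ (so that the continuity axiom and the common moduli of the $A_r$ are available), and calibrate the arity factor $n$ so that $c\in(0,1)$ uniformly in $\e$. The harmless factor $\tfrac12$ is precisely what turns $d_E(\sigma,\tau)\le\eta(c\e)/2$ into the strict bound $d(\sigma b_i^a,\tau b_i^a)<\eta_{t_a}(c\e)$ needed to invoke the modulus, and for $c\e>1$ one simply uses monotonicity of the moduli.
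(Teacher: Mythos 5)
Your proof is correct and takes essentially the same approach as the paper's: express the elements of $E'$ as terms applied to a finite $E\subset G_2$, fix domains and the modulus $\eta=\min_{t\in T_0}\eta_t$, and combine the $\delta$-morphism property with a coordinate-by-coordinate (telescoping) uniform-continuity estimate to show that microstates close on $E$ are close on $E'$, which then yields the packing-number comparison and, after taking infima, the stated inequality. The differences are purely bookkeeping: the paper runs the estimate inside a single microstate space $\MS(F^T,R,\delta,A_r)$ (a maximal $\eta(\e)/2$-separated set for $d_E$ is shown $(n+1)\e$-dense for $d_{E'}$, giving $\h(E',F^\sT,2(n+1)\e)\leq\h(E,F^\sT,\eta(\e)/2)$ and then $\e$ is rescaled to produce $c$), whereas you compare two microstate spaces via the restriction map and absorb the error $2\delta'+nc\e$ by calibrating $\delta'$ --- incidentally making explicit (via $R'=R\cup T_0$) the requirement that the defining relations of $E'$ lie in the relation set, which the paper leaves implicit in its infimum over $R$.
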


\begin{proof} Since $E'\subset G_{1}\subset G_2^\sT$ there exists $E=\{s_{1},\ldots,s_{n}\}\subset G_{2}$
and $T_{0}\subset\sT$ finite  such that for every $s\in E'$ there
exists $t\in T_{0}$ so that 
\[
s=t(s_{1},\ldots,s_{n}).
\]

Fix domains $D_i$ containing $s_i$ and define $\eta\in \sU$ by
\[
\eta:=\min_{t\in T_0} \eta_t^{D_{1},\ldots, D_{n}}\colon (0,1]\to (0,1].
\]

Let  $T_{0}\subset T\ll\sT$, $R\ll \sT\cup \sG$, 
$E\cup E'\subset F\ll G_{1}^{\sT}=G_{2}^{\sT}$
 and $r\in\mathbb{N}$.

Given $\e>0$, we let $\Omega_{\e}\subset\MS(F^T,R,\delta,A_{r})$ be a
maximal $\eta(\e)/2$ separated set with respect to  $d_{E}$. For any
$\sigma\in\MS(F^T,R,\delta,A_{r})$ there exists $\sigma'\in\Omega_{\e}$
so that for all $s_{i}\in E$ 
\[
d(\sigma(s_{i}),\sigma'(s_{i}))\leq\eta(\e).
\]
Since for $s\in E'$  
\[
d\left(\sigma(s),t(\sigma(s_{1}),\ldots,\sigma(s_{n}))\right)<\delta\text{ and }d\left(t(\sigma'(s_{1}),\ldots,\sigma'(s_{n})),\sigma'(s)\right)<\delta
\]
we have 
\begin{align*}
d(\sigma(s),\sigma'(s)) & \le2\delta+d\left(t(\sigma(s_{1}),\ldots,\sigma(s_{n})),t(\sigma'(s_{1}),\ldots,\sigma'(s_{n}))\right)\\
 & \le2\delta+d\left(t(\sigma(s_{1}),\ldots,\sigma(s_{n})),t(\sigma'(s_{1}),\sigma(s_{2}),\ldots,\sigma(s_{n}))\right)+\ldots\\
 & \hspace{1cm}+d\left(t(\sigma'(s_{1}),\ldots,\sigma'(s_{n-1}),\sigma(s_{n})),t(\sigma'(s_{1}),\ldots,\sigma'(s_{n}))\right)\\
 & \leq2\delta+n\e<(n+1)\e\text{ (wma \ensuremath{2\delta<\e})}
\end{align*}
where we use the fact that $\sigma(s_i),\sigma'(s_i)\in D_i$ since the microstates are domain preserving.

Therefore $\Omega_{\e}$ is $(n+1)\e$ dense set w.r.t.\ $d_{E'}$. Then
\begin{align*}
N_{E',2(n+1)\e}(\MS(F^T,R,\delta,A_{r}))\\
&\hspace{-2cm}\leq\tilde{N}_{E',(n+1)\e}(\MS(F^T,R,\delta,A_{r}))\leq|\Omega_{\e}|=N_{E,\eta(\e)/2}(\MS(F^T,R,\delta,A_{r})
\end{align*}
where $\tilde{N}$ denotes the minimal covering number. Dividing by $N(r)$ and taking a limit over $r$ and inf over
$T,R$ and $\delta$ we obtain
\[
\h(E',F^\sT, 2(n+1)\e)\leq\h(E,F^\sT,\eta(\e)/2).
\]
 \end{proof}

\begin{lemma}\label{L-alg inv eps lemma}
 Assume that $G_{1}^{\sT}=G_{2}^{\sT}$ and let $E'\subset G_{1}$ be finite.
Then there are $E\subset G_{2}$, $c\in (0,1)$ and $\eta\in \sU$ such that for every $\e>0$ we have
\[
\h(E',G_{1}^\sT,\e)\leq\h(E,G_{2}^\sT,{\eta(c\e)}/2)
\] 
\end{lemma}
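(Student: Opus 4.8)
The plan is to promote the finite version already proved in Lemma \ref{L - alt invariance Voicu lemma} to this ``global'' statement, by writing each of the two quantities $\h(E',G_1^\sT,\e)$ and $\h(E,G_2^\sT,\eta(c\e)/2)$ as an infimum over \emph{the same} family of finite sets and comparing them term by term.

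First I would apply Lemma \ref{L - alt invariance Voicu lemma} to $E'$ to obtain the data $E\subset G_2$, $c\in(0,1)$ and $\eta\in\sU$; crucially these depend only on $E'$ (and on $G_1,G_2$), not on $\e$ nor on any auxiliary finite set. What that lemma delivers is the pointwise family of inequalities
\[
\h(E',F^\sT,\e)\leq\h(E,F^\sT,\eta(c\e)/2)
\]
valid for every $\e>0$ and every finite $F$ with $E\cup E'\subset F\subset G_1^\sT=G_2^\sT$. The remaining work is purely to turn this family into a single inequality between the two infima.

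The key move is to apply Lemma \ref{L-techniques}(2) with ambient set $G_1^\sT$ rather than $G_1$. Taking $B=G_1^\sT$ (so that $B^\sT=B_0=G_1^\sT$, using that the term closure is idempotent) and letting the two finite sets of that lemma be $E'$ (the set whose entropy is measured) and $E$ (the auxiliary set forcing $F\supset E$) — both are legitimate finite subsets of $G_1^\sT=G_2^\sT$ — gives
\[
\h(E',G_1^\sT,\e)=\inf_{E\cup E'\subset F\ll G_1^\sT}\h(E',F^\sT,\e).
\]
Running the same computation with $B=G_2^\sT$ and the roles of $E$ and $E'$ interchanged yields
\[
\h(E,G_2^\sT,\eta(c\e)/2)=\inf_{E\cup E'\subset F\ll G_2^\sT}\h(E,F^\sT,\eta(c\e)/2).
\]
Because $G_1^\sT=G_2^\sT$, the two index families $\{F:E\cup E'\subset F\ll G_1^\sT\}$ and $\{F:E\cup E'\subset F\ll G_2^\sT\}$ are literally identical, so I may insert the pointwise estimate above into the first display and take the infimum over this common family, obtaining
\[
\h(E',G_1^\sT,\e)\leq\inf_{E\cup E'\subset F\ll G_1^\sT}\h(E,F^\sT,\eta(c\e)/2)=\h(E,G_2^\sT,\eta(c\e)/2),
\]
which is the assertion.

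The only point requiring care — and the step I would check most scrupulously — is the bookkeeping in the two invocations of Lemma \ref{L-techniques}(2). One must verify that $E$ and $E'$ are genuinely finite subsets of the chosen ambient set; this is exactly what forces the choice $B=G_1^\sT$ (and $B=G_2^\sT$) in place of the naive $B=G_1$ (and $G_2$), since $E\subset G_2$ need not lie in $G_1$. One must also confirm that the two infima range over one and the same family of finite sets, which is the single place where the hypothesis $G_1^\sT=G_2^\sT$ enters. Once these are in place the rest is the formal passage from a family of inequalities to an inequality between infima, with no loss in $\e$ beyond the $\eta(c\e)/2$ already produced by Lemma \ref{L - alt invariance Voicu lemma}.
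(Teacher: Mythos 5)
Your proposal is correct and follows essentially the same route as the paper: apply Lemma \ref{L - alt invariance Voicu lemma} to get the pointwise inequalities over finite $F$ with $E\cup E'\subset F\subset G_1^\sT=G_2^\sT$, pass to infima over this common family, and identify the infima with $\h(E',G_1^\sT,\e)$ and $\h(E,G_2^\sT,\eta(c\e)/2)$ via Lemma \ref{L-techniques}(2). The paper's proof is terser (it just says ``then use Lemma \ref{L-techniques}(2)''), and your careful instantiation with $B=B_0=G_1^\sT$ (rather than $B=G_1$, which would fail since $E\not\subset G_1$ in general) is exactly the right way to make that step precise.
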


\begin{proof} Let $E'\subset G_{1}$
be a finite set. By the previous lemma  there is a subset $E$ of $G_{2}$, $c\in (0,1)$ and $\eta\in \sU$ such
that for any $\e>0$ and any finite set $E\cup E'\subset F\subset G_{1}^{\sT}=G_{2}^{\sT}$
we have 
\[
\h(E',F^\sT,\e)\leq\h(E,F^\sT,{\eta(c\e)}/2)
\]
then 
\[
\inf_{E\cup E'\subset F\subset G_{1}^{\sT}}\h(E',F^\sT,\e)\leq\inf_{E\cup E'\subset F\subset G_{2}^{\sT}}\h(E,F^\sT,{\eta(c\e)}/2)
\]
then use Lemma \ref{L-techniques} (2). 
 \end{proof}
\begin{proof}[Proof of Theorem \ref{T-algebraic gen}] 
We prove
$\dim(G_{1})\leq\dim(G_{2})$. Let $E'\ll G_2$. By the previous lemma there are $E\ll G_1$  $\eta\in \sU$ and $c\in (0,1)$ such that for any $\e>0$ we have
\[
\h(E',G_{1}^\sT,\e)\leq\h(E,G_{2}^\sT,{\eta(c\e)}/2)
\] 
 Divide by $L(\e)$
\[
\frac{\h(E',G_{1}^\sT,\e)}{L(\e)}\leq\frac{\h(E,G_{2}^\sT,{\eta(c\e)}/2)}{L({\eta(c\e)}/2)}\frac{L({\eta(c\e)}/2)}{L(\e)}
\]
and take a lim sup as $\e\to 0$ to obtain by assumption on $L$ and since ${\eta(c\e)}/2\to 0$
\[
\limsup_{\e\to 0}\frac{\h(E',G_{1}^\sT,\e)}{L(\e)}\leq\limsup_{\e\to 0}\frac{\h(E,G_{2}^\sT,{\eta(c\e)}/2)}{L({\eta(c\e)}/2)}\leq \dim(G_2).
\]
The lemma follows taking a sup over $E'\ll G_1$.
\end{proof}

\begin{remark}\label{Rem - alg inv} We haven't assumed that  $G_i\subset D$ for some $D\in \sD$. If this is the case, then it is enough to suppose the microstates  $D$-preserving. 
\end{remark}

\section{The topological invariance of $\h$}

We now assume that our microstates are contractive (so the entropy is defined using $\CMS$) or  work with $\MS$ if $d\in \sG$.

\begin{definition}\label{D- top gen} We call a set $G\subset A$ is a \emph{topological generating set}
if for every $\e>0$ and $x\in A$ there exists $y\in G^{\sT}$
so that $d(x,y)<\e$ and $y$ is in any domain that $x$ is
in.\end{definition}

\begin{theorem}\label{T - top independence theorem} If $G_1$, $G_2$ are
two topological generating sets of $A$ then 
\[
\h(G_1)=\h(G_2).
\]
\end{theorem}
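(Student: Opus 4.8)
The plan is to prove $\h(G_1)\le \h(G_2)$; the reverse inequality then follows by exchanging the roles of $G_1$ and $G_2$. Unwinding $\h(G_1)=\sup_{E'\ll G_1}\sup_{\e>0}\h(E',G_1^\sT,\e)$ and recalling that $\h$ increases as $\e\downarrow 0$, it suffices to fix a finite $E'\ll G_1$ and a target scale $\alpha>0$ and to show $\h(E',G_1^\sT,\alpha)\le \h(G_2)$.

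First I would establish a core packing estimate, the contractive analogue of Lemma \ref{L - alt invariance Voicu lemma}. Since $G_2$ is a topological generating set (Definition \ref{D- top gen}), for a prescribed approximation quality $\e_0>0$ I can write, for each $s\in E'$, a term $\tilde s=t_s(s_1,\dots,s_n)\in G_2^\sT$ with $s_i\in E_2:=\{s_1,\dots,s_n\}\ll G_2$, $d(s,\tilde s)<\e_0$, and $\tilde s$ in every domain that $s$ is in. Put $\eta:=\min_{s\in E'}\eta_{t_s}^{D_1,\dots,D_n}\in\sU$. Working on a common microstate domain $F^\sT$ with $E'\cup E_2\subset F\ll A$, I take $\Omega_\e$ a maximal $\eta(\e)/2$-separated set for $d_{E_2}$ and, exactly as in Lemma \ref{L - alt invariance Voicu lemma}, telescope across the $n$ arguments of each $t_s$ to get $d(\sigma(\tilde s),\sigma'(\tilde s))\le 2\delta+n\e$ for the nearest $\sigma'\in\Omega_\e$. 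The new ingredient is two applications of $\delta$-contractivity, $d(\sigma(s),\sigma(\tilde s))\le d(s,\tilde s)+\delta<\e_0+\delta$ and likewise for $\sigma'$, which upgrade this to $d(\sigma(s),\sigma'(s))<2\e_0+(n+2)\e$ (assuming $2\delta<\e$). Hence $\Omega_\e$ is $(2\e_0+(n+2)\e)$-dense for $d_{E'}$, so $N_{E',2(2\e_0+(n+2)\e)}\le |\Omega_\e|=N_{E_2,\eta(\e)/2}$ on $\MS(F^\sT,R,\delta,A_r)$, and after dividing by $N(r)$ and passing to the infima this reads $\h(E',F^\sT,2(2\e_0+(n+2)\e))\le \h(E_2,F^\sT,\eta(\e)/2)$. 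Taking the infimum over common $F\ll A$ (inserting the extra points as in Lemma \ref{L-techniques}) yields $\h(E',A,2(2\e_0+(n+2)\e))\le \h(E_2,A,\eta(\e)/2)$, where I write $\h(E,A,\e):=\inf_{E\subset F\ll A}\h(E,F^\sT,\e)$.

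The step I expect to be the main obstacle is the \emph{domain interchange}: because $E'\ll G_1$ and $E_2\ll G_2$ live in two different, merely dense, algebraic spans $G_1^\sT\ne G_2^\sT$, the estimate above is forced onto microstate domains $F\ll A$, whereas $\h(G_i)$ is computed with domains inside $G_i^\sT$. I would close this gap with the identity $\h(E,G^\sT,\e)=\h(E,A,\e)$, valid for any topological generating set $G$ once microstates are contractive. The inequality $\h(E,A,\e)\le\h(E,G^\sT,\e)$ is immediate, being an infimum over the larger family $F\ll A$. For the reverse I would build a \emph{contractive transfer}: approximate the generators of a domain $F\ll A$ by nearby points of $G^\sT$ (keeping $E$ fixed, which is possible as $E\subset G^\sT$), obtaining $F_1\ll G^\sT$ and, by sending each generator to its approximant and propagating along terms, a map $\theta\colon F^\sT\to F_1^\sT$ with $d(x,\theta x)<\e_1$ on generators and $\theta|_E=\mathrm{id}$; then $\sigma_1\mapsto \sigma_1\circ\theta$ carries $\MS(F_1^\sT,R,\delta,A_r)$ into $\MS(F^\sT,R,\delta'',A_r)$ and is an isometry for $d_E$ (as $\theta$ fixes $E$), whence $N_{E,\alpha}(\MS(F_1^\sT))\le N_{E,\alpha}(\MS(F^\sT))$ and $\h(E,G^\sT,\alpha)\le\h(E,A,\alpha)$. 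Here contractivity is exactly what controls the transferred morphism defect: a genuine relation $a_{n+1}=t(a_1,\dots,a_n)$ in $F^\sT$ becomes, after $\theta$, only an approximate relation in $F_1^\sT$ with error $\gamma\to0$ as $\e_1\to0$, and the bound $d(\sigma_1(\theta a_{n+1}),t(\sigma_1(\theta a_1),\dots))\le\gamma+2\delta$ follows precisely because $\sigma_1$ is $\delta$-contractive. The resulting defect $\delta''=O(\e_1+\delta)$ and the harmless ambiguity in the term representation of elements of $F^\sT$ are absorbed by the infima over $\delta$ and over $F_1$ (that is, $\e_1\to0$); carrying out this bookkeeping cleanly is the delicate part.

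Granting the domain interchange for $G_1$, the argument concludes quickly: with $\alpha'=2(2\e_0+(n+2)\e)$,
\[
\h(E',G_1^\sT,\alpha')=\h(E',A,\alpha')\le \h(E_2,A,\eta(\e)/2)\le \h(E_2,G_2^\sT,\eta(\e)/2)\le \h(G_2),
\]
where the last two steps use the easy inequality $\h(E_2,A,\cdot)\le\h(E_2,G_2^\sT,\cdot)$ and the definition of $\h(G_2)$. Finally I would reconcile scales: given an arbitrary target $\alpha>0$ I set $\e_0:=\alpha/8$, which fixes $E_2$ and $n=n(\e_0)$, and then choose $\e:=\alpha/\bigl(4(n+2)\bigr)$ so that $\alpha'=\alpha$ exactly. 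This gives $\h(E',G_1^\sT,\alpha)\le \h(G_2)$ for every $\alpha>0$ and every $E'\ll G_1$, and taking suprema yields $\h(G_1)\le \h(G_2)$.
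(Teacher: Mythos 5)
Your overall architecture is sound and, once unwound, uses the same ingredients as the paper's proof: topological generation applied on both sides, a pullback of microstates along a map approximating the identity, and a packing-number comparison that exploits contractivity. Your split into a ``window change'' (from $E'$ to $E_2$ over a common domain, which is Lemma \ref{L - alt invariance Voicu lemma} plus two applications of contractivity) and a ``domain interchange'' ($\h(E,G^\sT,\e)=\h(E,A,\e)$ for topological generating sets) is a clean modular reorganization of the paper's single combined counting argument; it even avoids the paper's final appeal to Lemma \ref{L-alg inv eps lemma}, since your intermediate window $E_2$ sits inside $G_2$ rather than inside $G_2^{\sT}$. (A purely notational point: microstate spaces are defined on finite sets, so you should write $F^{T}$, $T\ll\sT$, and take infima over $T$, rather than $\MS(F^{\sT},\dots)$.)

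There is, however, a genuine gap, located exactly at the step you flag as delicate. Your transfer map $\theta\colon F^{\sT}\to F_1^{\sT}$ is built by approximating only the \emph{generators} $b\in F$ by points of $G^{\sT}$ and then ``propagating along terms.'' Such a $\theta$ need not be domain preserving on composite elements: if $a=t(b_{i_1},\dots,b_{i_k})\in F^{T}$, then $\theta(a)=t(c_{i_1},\dots,c_{i_k})$ has no reason to lie in the domains containing $a$ (in the tracial von Neumann algebra model of \textsection\ref{S-voi}, for instance, a product of approximants can exit the operator-norm ball that contains the original product). Consequently $\sigma_1\circ\theta$ need not be domain preserving, hence need not belong to $\MS(F^{T},R,\delta'',A_r)$ at all --- membership in $\MS$ \emph{requires} domain preservation by definition --- and your key inequality $N_{E,\alpha}(\MS(F_1^{T_1},R,\delta,A_r))\le N_{E,\alpha}(\MS(F^{T},R,\delta'',A_r))$ is unjustified as stated. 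The repair needs no new idea, only the full strength of Definition \ref{D- top gen}: topological generation supplies, for \emph{every} $x\in A$ (not just for generators), an approximant $y\in G^{\sT}$ with $d(x,y)<\e_1$ that lies in every domain $x$ lies in. Define $\theta$ pointwise on $F^{T}$ by this property, set $\theta|_E=\mathrm{id}$, and enlarge $F_1$ and $T_1$ so that $F_1^{T_1}$ contains $\theta(F^{T})$, $E$, and $t(\theta a_1,\dots,\theta a_n)$ for all $t\in R$. Then $\theta$ is domain preserving by construction and uniformly close to the identity on all of $F^{T}$ (which also simplifies your error analysis), and your pullback becomes precisely an instance of the paper's Lemma \ref{L - Indep theorem lemma}, whose hypotheses make ``domain preserving'' explicit. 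With that substitution the rest of your argument, including the scale bookkeeping at the end, goes through.
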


By symmetry we only have to show $\h(G_1)\leq\h(G_2)$. We may assume
that $\h(G_1)\neq\infty$ and $\h(G_2)\neq-\infty$. 

\begin{lemma} \label{L - Indep theorem lemma} Suppose $F,K\ll A$,  $S,T\ll\sT$ and $R\ll \sT\cup \sG$. For every $a\in  K^{S}$ choose a domain $D_a$ containing $a$ and let
\[
\eta:=\min_{t\in R}\min_{ a_1,\ldots a_n\in K^{S}}\eta_t^{D_{a_1},\ldots, D_{a_n}}.
\]
Note that $\eta$ depends on $R$, $K$ and $S$ but not on $F$ and $T$.

Let $\delta,\delta'>0$ and assume $\theta:K^{S}\rightarrow F^{T}$
is a domain preserving map such that $d(\theta(a),a)<\eta(\delta')$ for every $a\in K^{S}$ and $t(\theta(a_1),\ldots,\theta(a_n))\in F^T$ for all $t \in R$ and $a_1,\ldots, a_n\in K^{S}$ then the assignment
$\sigma\mapsto\sigma\circ\theta$ takes $\MS(F^T,R,\delta,A_{r})$ to
$\MS(K^S,R,2\delta+2\eta(\delta')+n_{R}\delta',A_{r})$ with $n_R$ being the maximal arity of $R$. 

\end{lemma}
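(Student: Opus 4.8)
The plan is to verify the two microstate conditions directly for $\sigma \circ \theta$, where $\sigma \in \MS(F^T, R, \delta, A_r)$ and $\theta \colon K^S \to F^T$ is the given domain-preserving map. Writing $\tau := \sigma \circ \theta$, I must show $\tau$ is a domain-preserving $(R, 2\delta + 2\eta(\delta') + n_R \delta')$-microstate on $K^S$. Domain preservation is the easy part: if $a \in K^S$ lies in a domain $D$, then $\theta(a) \in D$ since $\theta$ is domain preserving, and $\sigma(\theta(a))$ lies in $D$ since $\sigma$ is domain preserving, so $\tau(a) \in D$. I would dispose of this first.

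Next I would check the state-preserving condition, which only requires one application of the continuity symbol $\eta$. Fix $g \in R \cap \sG$ and $a_1, \ldots, a_n \in K^S$. I want to compare $g(a_1, \ldots, a_n)$ with $g(\tau(a_1), \ldots, \tau(a_n))$. The point is to interpolate through $g(\theta(a_1), \ldots, \theta(a_n))$. Since $d(\theta(a_i), a_i) < \eta(\delta')$ for each $i$, and $\eta$ is built as a minimum of the relevant moduli $\eta_t^{D_{a_1}, \ldots, D_{a_n}}$ over $t \in R$ including $g$, a telescoping argument across the $n$ coordinates gives $d(g(a_1, \ldots, a_n), g(\theta(a_1), \ldots, \theta(a_n))) \leq n_R \delta'$ — using that each single-coordinate swap costs at most $\delta'$ by the defining property of $\eta$, and there are at most $n_R$ coordinates. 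Separately, since $\theta(a_i) \in F^T$ and $\sigma$ is $\delta$-state-preserving, $d(g(\theta(a_1), \ldots, \theta(a_n)), g(\sigma(\theta(a_1)), \ldots, \sigma(\theta(a_n)))) \leq \delta$. The triangle inequality then bounds the total by $n_R \delta' + \delta$, which is well within the claimed tolerance.

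The $\delta$-morphism condition is where the hypothesis $t(\theta(a_1), \ldots, \theta(a_n)) \in F^T$ is needed, and I expect this to be the main obstacle to write cleanly. Fix $t \in R \cap \sT$ and suppose $a_{n+1} = t(a_1, \ldots, a_n)$ with all $a_i \in K^S$. I want to bound $d(\tau(a_{n+1}), t(\tau(a_1), \ldots, \tau(a_n)))$. I would insert the intermediate element $b := t(\theta(a_1), \ldots, \theta(a_n))$, which lies in $F^T$ by hypothesis, so $\sigma$ can be applied to it. The triangle inequality splits the target into three pieces: first $d(\sigma(\theta(a_{n+1})), \sigma(b))$, second $d(\sigma(b), t(\sigma(\theta(a_1)), \ldots, \sigma(\theta(a_n))))$, and the telescoping I used above is re-purposed here.

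For the middle piece, since $b = t(\theta(a_1), \ldots, \theta(a_n)) \in F^T$ realizes the term relation $t$ inside $F^T$ and $\sigma$ is a $\delta$-morphism, we get $d(\sigma(b), t(\sigma(\theta(a_1)), \ldots, \sigma(\theta(a_n)))) \leq \delta$. For the first piece, I need to control $d(\theta(a_{n+1}), b)$ and then push through $\sigma$; here $d(\theta(a_{n+1}), a_{n+1}) < \eta(\delta')$ and, by a telescoping estimate identical in shape to the state case, $d(a_{n+1}, b) = d(t(a_1, \ldots, a_n), t(\theta(a_1), \ldots, \theta(a_n))) \leq n_R \delta'$; combining and applying that $\sigma$ is $\delta$-contractive (we are in the $\CMS$ or $d \in \sG$ setting) yields $d(\sigma(\theta(a_{n+1})), \sigma(b)) \leq \eta(\delta') + n_R\delta' + \delta$. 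Summing the three contributions gives the total tolerance $2\delta + \eta(\delta') + n_R \delta'$; the stated bound $2\delta + 2\eta(\delta') + n_R\delta'$ leaves a harmless extra $\eta(\delta')$ of slack, so the estimate goes through. The subtle point to get right is that $\eta$ is deliberately defined to depend on $K$, $S$, and $R$ but \emph{not} on $F$ or $T$, so every modulus-of-continuity application above is legitimately bounded by the single function $\eta$ regardless of how large $F^T$ is — this uniformity is exactly what makes the lemma usable later.
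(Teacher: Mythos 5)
Your treatment of domain preservation, of the state condition (bound $\delta+n_{R}\delta'$), and of the term condition (bound $2\delta+\eta(\delta')+n_{R}\delta'$, via the intermediate point $b=t(\theta(a_1),\ldots,\theta(a_n))\in F^T$ and the telescoping through the domains $D_{a_i}$) follows the paper's own proof essentially step for step. But there is a genuine omission: you never verify that $\sigma\circ\theta$ is itself suitably \emph{contractive}. This section of the paper operates under the standing convention that microstates are contractive (the entropy is defined using $\CMS$, or using $\MS$ when $d\in\sG$), and you invoke exactly this convention when you use the $\delta$-contractivity of $\sigma$ in your morphism estimate. Consistency then forces the same convention on the target space: membership of $\sigma\circ\theta$ in $\MS(K^{S},R,2\delta+2\eta(\delta')+n_{R}\delta',A_{r})$ requires a contractivity (or, if $d\in\sG$, a metric-state) estimate as well, and this is what the subsequent covering argument in the proof of Theorem \ref{T - top independence theorem} actually consumes, since the entropies being compared there are defined with contractive microstates. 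The paper checks this as a third, separate item: for $a_{1},a_{2}\in K^{S}$,
\[
d(\sigma\circ\theta(a_{1}),\sigma\circ\theta(a_{2}))\leq d(\theta(a_{1}),\theta(a_{2}))+\delta\leq d(a_{1},a_{2})+2\eta(\delta')+\delta,
\]
so $\sigma\circ\theta$ is $(2\eta(\delta')+\delta)$-contractive. Note that if instead you tried to evade this by reading both microstate spaces as the plain, non-contractive $\MS$ of \textsection\ref{S-def} with $d\notin\sG$, then your use of the $\delta$-contractivity of $\sigma$ would itself be unjustified; so under either reading the written proof is incomplete.

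This omission also explains the constant you dismissed as ``harmless extra slack.'' The tolerance $2\delta+2\eta(\delta')+n_{R}\delta'$ is chosen to dominate simultaneously the morphism bound $2\delta+\eta(\delta')+n_{R}\delta'$ and the contractivity bound $\delta+2\eta(\delta')$, neither of which dominates the other; the coefficient $2$ on $\eta(\delta')$ is precisely the cost of the two triangle-inequality replacements $a_{i}$ by $\theta(a_{i})$ in the contractivity estimate. The gap is easy to fill (two applications of the triangle inequality plus the $\delta$-contractivity of $\sigma$, as displayed above), but as written your argument only establishes membership in the non-contractive microstate space, which is not what the lemma, as it is used later, asserts.
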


\begin{proof} 
Let $\sigma\in\MS(F^T,R,\delta,A_{r})$ and take $t\in \sT\cap R$
of arity $n$. It is clear that $\sigma\circ\theta$ is domain preserving
from our setup.

If $a,a_{1},\ldots,a_{n}\in K^{S}$ satisfy $t(a_{1},\ldots,a_{n})=a$
then 
\[
d\left(\theta(a),t(\theta(a_{1}),\ldots,\theta(a_{n}))\right)\leq d(\theta(a),a)+d\left(a,t(\theta(a_{1}),\ldots,\theta(a_{n}))\right)\leq\eta(\delta')+n_{R}\delta'.
\]
Hence 
\begin{align*}
&d(\sigma\circ\theta(a),t(\sigma\circ\theta(a_{1}),\ldots,\sigma\circ\theta(a_{n})) \\
&\hspace{2cm}\leq d\left(\sigma\circ\theta(a),\sigma(t(\theta(a_{1}),\ldots,\theta(a_{n}))\right)\\
 &\hspace{4cm} +d\left(\sigma(t(\theta(a_{1}),\ldots,\theta(a_{n})),t(\sigma\circ\theta(a_{1}),\ldots,\sigma\circ\theta(a_{n})\right)\\
 & \hspace{2cm}\leq\eta(\delta')+n_{R}\delta'+\delta+\delta
\end{align*}
so $\sigma$ is  $(2\delta+\eta(\delta')+n_{R}\delta')$-multiplicative.

Next take $g\in \sG\cap R$ and $a_{1},\ldots,a_{n}\in K^{S}$ then 
\begin{align*}
&d\left(g(\sigma\circ\theta(a_{1}),\ldots,\sigma\circ\theta(a_{n})),g(a_{1},\ldots,a_{n})\right) \\
&\hspace{1cm} \leq d\left(g(\sigma\circ\theta(a_{1}),\ldots,\sigma\circ\theta(a_{n})),g(\theta(a_{1}),\ldots,\theta(a_{n}))\right)\\
 &\hspace{3cm} +d\left(g(\theta(a_{1}),\ldots,\theta(a_{n})),g(a_{1},\ldots,a_{n})\right)\\
 &\hspace{1cm} \leq\delta+n_{R}\delta'
\end{align*}
so $\sigma$ is  $(\delta+n_{R}\delta')$-multiplicative.

Finally for $a_{1},a_{2}\in K^{S}$ 
\begin{align*}
d(\sigma\circ\theta(a_{1}),\sigma\circ\theta(a_{2})) & \leq d(\theta(a_{1}),\theta(a_{2}))+\delta\leq d(a_{1},a_{2})+2\eta(\delta')+\delta
\end{align*}
so $\sigma$ is $(2\eta(\delta')+\delta)$-contractive.
\end{proof}

\begin{lemma}
If $G_1,G_2$ are topological generating sets then for every $\e>0$ and every $E\ll G_1$ there exists $H\ll G_2$, $c\in(0,1)$ and $\eta\in \sU$ such that 
\[
\h(E,G_1^\sT,\e)\leq\h(H,G_2^\sT,{\eta(c\e)}/2).
\]
\end{lemma}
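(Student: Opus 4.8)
The plan is to run the strategy of Lemmas \ref{L - alt invariance Voicu lemma}--\ref{L-alg inv eps lemma} in the topological setting, using the transport map of Lemma \ref{L - Indep theorem lemma} to pass from $G_1$-microstates to $G_2$-microstates and a continuity estimate to transfer separation. Since $G_2$ is a topological generating set, for a tolerance $\mu>0$ (fixed later as a function of $\e$) and each $e\in E$ there is $y_e\in G_2^\sT$ with $d(e,y_e)<\mu$ lying in the same domains as $e$; writing $y_e=t_e(h_1,\dots,h_m)$ with $h_j\in G_2$ and $t_e\in\sT$, I gather the generators into $H\ll G_2$ and the terms into $S\ll\sT$, so that $y_e\in H^S$. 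This $H$ (depending only on $E$ and $\mu$, hence on $\e$) is the asserted set, and I let $\eta\in\sU$ be the minimum of the moduli $\eta_{t_e}$ taken with respect to fixed domains of the $h_j$, shrunk so that $\eta(\e)\le\e$.

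I then compare the two infima pointwise, as in Lemma \ref{L-alg inv eps lemma}. Fix an arbitrary $G_2$-side datum $(F',R',\beta)$ occurring in the infimum defining $\h(H,G_2^\sT,\eta(c\e)/2)$. Enlarging, I may assume $H^S\cup F'\subset K^S$ for finite $K\ll G_2$ and $S\ll\sT$, and I set $R:=R'\cup\{t_e\}$. Using now that $G_1$ is a topological generating set I round $K^S$ into $G_1^\sT$, obtaining a domain preserving $\theta\colon K^S\to F^T$ with $F\ll G_1$ chosen so that $E\subset F$, some $T\ll\sT$, and $d(\theta(a),a)$ as small as desired; choosing the microstate tolerance $\delta$ and the rounding tolerance $\delta'$ small enough, the output tolerance $\delta_0:=2\delta+2\eta(\delta')+n_R\delta'$ of Lemma \ref{L - Indep theorem lemma} is both $\le\beta$ and as small as we wish. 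That lemma then provides $\Phi\colon\sigma\mapsto\sigma\circ\theta$ from the $G_1$-side space $\MS(F^T,R,\delta,A_r)$ into $\MS(K^S,R,\beta,A_r)$, with every image $\Phi\sigma$ an honest $\delta_0$-microstate.

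The crux is the separation transfer. For $e\in E$ and a microstate $\sigma$ one has
\[
\sigma(e)\approx\sigma(\theta(y_e))=(\Phi\sigma)(y_e)\approx t_e\big((\Phi\sigma)(h_1),\dots,(\Phi\sigma)(h_m)\big),
\]
where the first step uses $d(e,\theta(y_e))\le d(e,y_e)+d(y_e,\theta(y_e))<\mu+\eta(\delta')$ together with the $\delta$-contractivity available in the present setting, and the second uses that $\Phi\sigma$ is a morphism on $t_e$ up to $\delta_0$. Hence, if $d_H(\Phi\sigma_1,\Phi\sigma_2)<\eta(c\e)$, the defining property of $\eta$ makes each $t_e((\Phi\sigma_i)(h_1),\dots,(\Phi\sigma_i)(h_m))$ vary by less than $c\e$, and absorbing the uniformly small terms $\mu,\eta(\delta'),\delta,\delta_0$ into a constant $c\in(0,1)$ yields $d_E(\sigma_1,\sigma_2)<\e$. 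Consequently $\Phi$ carries a maximal $\e$-separated set for $d_E$ injectively to an $\eta(c\e)$-separated set for $d_H$ inside $\MS(K^S,R,\beta,A_r)$, so that
\[
N_{E,\e}\big(\MS(F^T,R,\delta,A_r)\big)\le N_{H,\eta(c\e)}\big(\MS(K^S,R,\beta,A_r)\big)\le N_{H,\eta(c\e)/2}\big(\MS(K^S,R,\beta,A_r)\big).
\]

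Dividing by $N(r)$ and taking $\limsup_{r\to\infty}$ gives $\h(E,F^T,R,\delta,\e)\le\h(H,K^S,R,\beta,\eta(c\e)/2)$. I finish by assembling the infima: on the left $\h(E,G_1^\sT,\e)\le\h(E,F^T,R,\delta,\e)$ because $(F^T,R,\delta)$ is an admissible $G_1$-side triple with $E\subset F^T\ll G_1^\sT$; on the right, since $F'\subset K^S$ and $R'\subset R$, monotonicity of $\h$ gives $\h(H,K^S,R,\beta,\eta(c\e)/2)\le\h(H,F',R',\beta,\eta(c\e)/2)$. As $(F',R',\beta)$ was arbitrary, taking the infimum over it produces $\h(E,G_1^\sT,\e)\le\h(H,G_2^\sT,\eta(c\e)/2)$. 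I expect the main difficulty to be exactly this bookkeeping of the two nested infima together with the tolerance propagation: for each given $G_2$-side triple one must construct a single $G_1$-side triple whose transported space lands in $\MS(K^S,R,\beta,A_r)$, which forces $\delta$ and $\delta'$ to be chosen after $\beta$ so that the output tolerance of Lemma \ref{L - Indep theorem lemma} drops below it, while keeping all monotonicities (larger $F$, larger $R$, smaller tolerance decrease $\h$) pointing the same way; the continuity transfer itself is routine once $\eta$ is taken below the identity so that $\eta(\delta')\to0$.
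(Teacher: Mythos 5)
Your argument is correct, and its second half follows a genuinely different route from the paper's. Both proofs share the same skeleton: use topological generation of $G_2$ to write approximants $y_e=t_e(h_1,\dots,h_m)\in H^S$ of the points of $E$, use topological generation of $G_1$ to produce the rounding map $\theta\colon K^S\to F^T$, and transport microstates by $\sigma\mapsto\sigma\circ\theta$ via Lemma \ref{L - Indep theorem lemma}. They then diverge. The paper measures separation of the transported microstates in $d_{H^{S_0}}$, where $H^{S_0}\subset G_2^{\sT}$ is the target of its map $\rho$, and compares packing with covering: a maximal subset $\Omega_r$ on which $\theta_r$ is injective with $\e$-separated image is shown (via contractivity) to be $8\e$-dense for $d_E$, giving $N_{E,16\e}(\MS(F^T,R,\delta,A_r))\leq N_{H^{S_0},\e}(\MS(K^S,R,3\delta,A_r))$; since $H^{S_0}$ is not a subset of $G_2$, the paper must then finish by invoking Lemma \ref{L-alg inv eps lemma}, which is where the constant $c$ and the quantity $\eta(c\e)/2$ enter. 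You avoid that last step: by adjoining the terms $t_e$ to $R$ and combining contractivity with the $(R,\delta_0)$-morphism property of $\sigma\circ\theta$ and the moduli of the $t_e$, you transfer $\e$-separation in $d_E$ directly to $\eta(c\e)$-separation in $d_H$ with $H\ll G_2$; in effect you inline the mechanism of Lemma \ref{L - alt invariance Voicu lemma} into the transport and obtain a packing-to-packing injection $N_{E,\e}\leq N_{H,\eta(c\e)}$ in one shot (also yielding $\e$ on the left rather than the paper's $16\e$, which the paper silently rescales away). The paper's version is more modular, reusing its algebraic lemma; yours is self-contained and arguably cleaner.

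Three pieces of bookkeeping should be made explicit in a final write-up. First, you overload the symbol $\eta$: the $\eta$ of the statement (built from the $\eta_{t_e}$ and the domains of the $h_j$, fixed before the triple $(F',R',\beta)$) is not the $\eta$ of Lemma \ref{L - Indep theorem lemma} (which depends on $K$, $S$, $R$, hence is chosen after); they must be kept distinct, both normalized below the identity so that the rounding error tends to $0$ with $\delta'$. Second, telescoping the modulus over the $m$ coordinates of $t_e$ gives variation $<mc\e$, not $<c\e$; this is harmless and is absorbed by taking $c\le 1/(2m)$, just as the paper absorbs its factor $(n+1)$. Third, the additive errors $\mu$, $\eta(\delta')$, $\delta$, $\delta_0$ cannot literally be ``absorbed into a constant $c\in(0,1)$'' --- a multiplicative constant never beats additive terms; what actually closes the estimate is that $\mu$ is fixed as a small multiple of $\e$ and that $\delta,\delta'$ (hence $\delta_0$) are chosen last, smaller than both $\beta$ and a small multiple of $\e$, which your quantifier ordering ($\mu$ a function of $\e$; $\delta,\delta'$ chosen after $\beta$) already permits.
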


\begin{proof}
Let $E\subset G_1$ and $\e>0$.
Since $G_2$ is topologically generating there exist finite sets $S_{0}\subset\sT$,
$H\subset G_2$ and a map $\rho:E\rightarrow H^{S_{0}}$ such that 
\[
d(\rho(a),a)<\e
\]
for all $a\in E$.

Let $R\ll \sT\cup \sG$, $S_{0}\subset S\ll\sT$ and $H\subset K\ll G_2$  and 
 choose any $\delta,\delta'>0$ satisfying $\delta<\e$ and  $2\eta(\delta')+n_{R}\delta'\leq\delta$ where $\eta$ is associated with $K^S$ and $R$ as in Lemma \ref{L - Indep theorem lemma}.

Since $G_1$ is topologically generating there exist finite sets $T\subset\sT$ and
$E\subset F\subset G_1$ and a map $\theta\colon K^{S}\to F^{T}$ such
that 
\[
d(\theta(a),a)<\eta(\delta')
\]
and $\theta(a)$ is in any domain $a$ is in for every $a\in K^{S}$. Furthermore by increasing $T$ if necessary we may assume that $t(\theta(a_1),\ldots,\theta(a_n))\in F^T$ for all $t \in R$ and $a_1,\ldots, a_n\in K^{S}$.

By Lemma \ref{L - Indep theorem lemma}, the assignment $\theta_{r}\colon\sigma\mapsto\sigma\circ\theta$
takes $\MS(F^T,R,\delta,A_{r})$ to 
\[
\MS(K^S,R,2\delta+2\eta(\delta')+n_{R}\delta',A_{r})\subset\MS(K^S,R,3\delta,A_{r})
\]
for every $r\in\mathbb{N}$. 

Let $\Omega_{r}\subset\MS(F^T,R,\delta,A_{r})$
be a maximal subset on which $\theta_r$ is injective and for which
$\theta_r(\Omega_{r})$ is $\e$-separated inside $\MS(K^S,R,3\delta,A_{r})$
with respect to $d_{H^{S_{0}}}$. We claim that $\Omega_{r}$ is $8\e$-dense in $\MS(F^T,R,\delta,A_{r})$.

Indeed let $\sigma_{0}\in\MS(F^T,R,\delta,A_{r})$, then by definition
of $\Omega_{r}$ we can find $\sigma\in\Omega_{r}$ such that 
\[
d(\sigma\circ\theta(a),\sigma_{0}\circ\theta(a))<2\e
\]
for every $a\in H^{S_{0}}$. Now for every $b\in E$  
\[
d\left(\theta\circ\rho(b),b\right)\leq d\left(\theta\circ\rho(b),\rho(b)\right)+d\left(\rho(b),b\right)<\eta(\delta')+\e<2\e
\]
so 
\begin{align*}
d\left(\sigma(b),\sigma_{0}(b)\right) & \leq d\left(\sigma(b),\sigma(\theta\circ\rho(b))\right)\\
 & \hspace{1cm}+d\left(\sigma(\theta\circ\rho(b)),\sigma_{0}(\theta\circ\rho(b))\right)+d\left(\sigma_{0}(\theta\circ\rho(b)),\sigma_{0}(b)\right)\\
 & \leq2(\delta+2\e)+2\e \leq8\e
\end{align*}
It follows that 
\begin{align*}
 & N_{E,16\e}(\MS(F^T,R,\delta,A_{r}))
  \leq\tilde{N}_{E,8\e}(\MS(F^T,R,\delta,A_{r}))
  \leq N_{H^{S_{0}},\e}(\MS(K^S,R,3\delta,A_{r}))
\end{align*}
Taking the $\log$, dividing by $N(r)$ and taking the limit on $r$,
we have
\[
\h(E,F^T,R,\delta,16\e)\leq\h(H^{S_{0}},K^S,R,3\delta,\e)
\]

Next we take the inf over $T\ll \sT$ and over $E\subset F\ll G_1^\sT$ followed by the inf over $\delta$,
over $S_0\subset S\ll \sT$, over $H\subset K\ll G_2^\sT$ and over $R\ll  \sT\cup \sG$, and  have
\[
\h(E,G_1^\sT,16\e)\leq\h(H^{S_{0}},G_2^\sT,\e).
\]
By Lemma \ref{L-alg inv eps lemma} since $H^{S_0}\subset G_2^{\sT}$ there exist $H'\subset G_2$,  $c\in (0,1)$ and $\eta'\in \sU$ such that for any $\e'>0$
\[
\h(H^{S_{0}},G_2^\sT,\e')\leq\h(H',G_2^\sT,{\eta'(c\e')}/2)
\] 
 so
\[
\h(E,G_1^\sT,16\e)\leq\h(H',G_2^\sT,{\eta'(c\e)}/2).
\]
\end{proof}

\begin{proof}[Proof of Theorem \ref{T - top independence theorem}]
If $\e>0$ and $E\ll G_1$ is finite then  there exists $H\subset G_2$ finite $c\in (0,1)$ and $\eta\in \sU$ such that 
\[
\h(E,G_1^\sT,\e)\leq\h(H,G_2^\sT,{\eta(c\e)}/2)\leq \sup_{H'\ll G_2}\sup_{\e'>0}\h(H',G_2^\sT,\e')= \h(G_2).
\]
The result follows taking a sup over $E$ and $\e>0$.
\end{proof} 

In fact a weak topological invariance result can be established for $\dim$. As with free entropy, one can introduce a relative version of the entropy dimension that is helpful.  Following up with the notation in \textsection \ref{S-def}   define the \emph{relative entropy} of $G\subset A$ \emph{in the presence of} $H\subset A$ by
\[
h(G:H):=\sup_{E\ll G}\sup_{\e>0}\h(E,(G\cup H)^{\sT},\e)
\]
and similarly the \emph{relative entropy dimension} 
\[
\dim(G:H):=\sup_{E\ll G}\limsup_{\e\to0}\frac{\h(E,(G\cup H)^{\sT},\e)}{L(\e)}.
\]
Then we have that:

\begin{proposition}\label{P - Voi relative} If $H_1,H_2\subset A$ are given with $H_2$ topologically generated by $G\cup H_1$ then  
\[
\dim(G: H_1)=\dim(G :H_1\cup H_2).
\]
\end{proposition}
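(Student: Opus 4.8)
The plan is to establish the two inequalities $\dim(G:H_1\cup H_2)\le\dim(G:H_1)$ and $\dim(G:H_1)\le\dim(G:H_1\cup H_2)$ separately. The first is immediate from monotonicity. Since $(G\cup H_1)^{\sT}\subseteq(G\cup H_1\cup H_2)^{\sT}$, the infimum over finite $F$ defining $\h(E,(G\cup H_1\cup H_2)^{\sT},\e)$ ranges over a larger family than the one defining $\h(E,(G\cup H_1)^{\sT},\e)$, so $\h(E,(G\cup H_1\cup H_2)^{\sT},\e)\le\h(E,(G\cup H_1)^{\sT},\e)$ for every $E\ll G$ and $\e>0$; dividing by $L(\e)>0$ and taking $\limsup_{\e\to0}$ and then $\sup_{E\ll G}$ gives the inequality.

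For the substantive inequality I would first note that the hypothesis forces the two term closures to coincide. As $H_2\subseteq\overline{(G\cup H_1)^{\sT}}$ and the terms in $\sT$ are uniformly continuous, applying terms to elements of the closure keeps them in the closure, so $(G\cup H_1\cup H_2)^{\sT}\subseteq\overline{(G\cup H_1)^{\sT}}$; together with the obvious inclusion this yields $\overline{(G\cup H_1)^{\sT}}=\overline{(G\cup H_1\cup H_2)^{\sT}}=:B$. Hence both $G\cup H_1$ and $G\cup H_1\cup H_2$ are topological generating sets for $B$ in the sense of Definition \ref{D- top gen}, relativised to $B$ in place of $A$. I would then replay the lemma preceding the proof of Theorem \ref{T - top independence theorem} with $G_1:=G\cup H_1$ and $G_2:=G\cup H_1\cup H_2$: its argument uses only topological generation of the ambient set, and all the microstate reindexings stay inside $B$, so it carries over with $A$ replaced by $B$.

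The feature to exploit is that the test set can be kept inside $G$, which is what makes the output admissible for $\dim(\,\cdot:\cdot\,)$, whose supremum runs over $E\ll G$. Fix $E\ll G$ and $\e>0$. Because $E\subseteq G\subseteq G_2$, the first approximation step of that lemma (the map $\rho$ sending $E$ into a word set $H^{S_0}$ with $H\subseteq G_2$) may be taken to be the inclusion, so $H^{S_0}=E$ and the right-hand test set remains $E\ll G$; the modulus $\eta\in\sU$ and the reindexing $\theta\colon K^{S}\to F^{T}$ with $F\subseteq G_1$ are produced from the topological generation of $B$ by $G_1=G\cup H_1$ and fed into Lemma \ref{L - Indep theorem lemma} exactly as in the original proof. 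Absorbing the universal constant from the density estimate using the fact that $\sU$ contains the identity modulus, this gives, for each $E\ll G$, constants $c\in(0,1)$ and $\eta\in\sU$ with
\[
\h(E,(G\cup H_1)^{\sT},\e)\le\h(E,(G\cup H_1\cup H_2)^{\sT},\eta(c\e)/2)\qquad(\e>0).
\]
Finally I would divide by $L(\e)$, insert the factor $L(\eta(c\e)/2)/L(\eta(c\e)/2)$, and take $\limsup_{\e\to0}$ exactly as in the proof of Theorem \ref{T-algebraic gen}; under the same slow-variation hypothesis $\lim_{\e\to0}L(\eta(c\e)/2)/L(\e)=1$ the extra factor disappears and one obtains $\limsup_{\e\to0}\h(E,(G\cup H_1)^{\sT},\e)/L(\e)\le\dim(G:H_1\cup H_2)$. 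Taking $\sup_{E\ll G}$ closes the argument.

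I expect the main obstacle to be the relativisation: one must verify that the domain-preservation clause of topological generation survives on the common closure $B$ and on the term sets $K^{S}$ built from $K\subseteq G_2$, so that the domain-preserving map $\theta$ with $t(\theta(a_1),\dots,\theta(a_n))\in F^{T}$ required by Lemma \ref{L - Indep theorem lemma} really exists. Everything after that is the bookkeeping already carried out in the earlier lemmas, combined with the hypothesis on $L$.
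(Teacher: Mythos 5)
The paper states Proposition \ref{P - Voi relative} without proof (it only points to \cite{Voicart}), so there is no line-by-line comparison to make; judged on its own, your strategy is certainly the natural one and, in outline, the intended one: monotonicity of $\h(E,\cdot,\e)$ in its second argument gives $\dim(G:H_1\cup H_2)\le\dim(G:H_1)$ at once, and for the converse you replay the lemma preceding Theorem \ref{T - top independence theorem} with $G_1=G\cup H_1$, $G_2=G\cup H_1\cup H_2$, taking the map $\rho$ to be the inclusion so that the test set stays equal to $E\ll G$ and the resulting estimate is admissible for the relative dimension. That part, and the final $L$-normalization bookkeeping, are correct. (A small quibble: nothing in the definition of $\sU$ guarantees that it contains the identity modulus; since with $\rho$ the inclusion your inequality takes the form $\h(E,(G\cup H_1)^{\sT},16\e)\le\h(E,(G\cup H_1\cup H_2)^{\sT},\e)$, it is cleaner to state the needed hypothesis on $L$ directly as $\lim_{\e\to0}L(c\e)/L(\e)=1$ for constants $c\in(0,1)$, which both normalizations used in the paper satisfy.)

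The genuine gap is the step you defer at the end as "the main obstacle": the construction of the map $\theta\colon K^{S}\to F^{T}$, with $K\ll G\cup H_1\cup H_2$ and $F\ll G\cup H_1$, required by Lemma \ref{L - Indep theorem lemma}. This is not bookkeeping; it is the one place where the hypothesis that $H_2$ is topologically generated by $G\cup H_1$ must actually be consumed, and it does not follow from that hypothesis as you have set things up. The metric half does upgrade: writing $a=s(k_1,\ldots,k_m)\in K^{S}$, replace each $k_i\in H_2$ by an approximant $y_i\in(G\cup H_1)^{\sT}$ lying in every domain containing $k_i$, and estimate $d(s(k_1,\ldots,k_m),s(y_1,\ldots,y_m))$ one coordinate at a time with the moduli $\eta_s^{D_1,\ldots,D_m}$ (the domain clause for the $y_i$ is exactly what makes these moduli applicable). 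But the domain-preservation half does \emph{not} propagate through terms: nothing forces $s(y_1,\ldots,y_m)$ to lie in every domain containing $s(k_1,\ldots,k_m)$. In the paper's main application (\textsection\ref{S-voi}) the domains are operator-norm balls, so the clause for the $y_i$ means $\|y_i\|\le\|k_i\|$; taking $s(x_1,x_2)=x_1-x_2$ one can have $s(k_1,k_2)=0$ while $s(y_1,y_2)\neq 0$, so $s(y_1,y_2)$ escapes domains containing $s(k_1,k_2)$. Without $\theta$ domain preserving, $\sigma\circ\theta$ need not be domain preserving, hence $\theta_r(\Omega_r)$ need not land in $\MS(K^{S},R,3\delta,A_r)$, and the packing inequality $N_{E,16\e}(\MS(F^T,R,\delta,A_r))\le N_{E,\e}(\MS(K^S,R,3\delta,A_r))$ at the heart of the argument is unjustified. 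The same objection applies to your preliminary claim that $G\cup H_1$ topologically generates the common closure $B$: metric closure does not preserve the domain clause. To close the gap you need either an additional assumption (for instance all relevant sets inside a single domain with $D$-preserving microstates, as in Remark \ref{Rem - alg inv}, or a strengthened notion of relative topological generation requiring domain-preserving approximation of all of $(G\cup H_1\cup H_2)^{\sT}$), or a new argument handling the domains.
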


(Compare \cite[\textsection 1]{Voicart}.) 

For example if $A=G''$ is a von Neumann algebra with the property $\G$ it contains asymptotically commuting projections  allowing to prove (see \cite{Voicart}) that $\dim(G)\leq 1$  where  the dimension ``$\dim$'' ($\leq \delta_0$) is defined in \textsection \ref{S-voi} below. Several ``vanishing results'' for $\dim$ can be established along these lines showing in particular cases that $\dim$ is a topological invariant. 

\section{Applications}\label{S-applications}

\subsection{Voiculescu's free probability theory}\label{S-voi}

Let $M$ be a von Neumann algebra with faithful normal tracial state $\tau$. 

Following Voiculescu, given self adjoint elements $X_1,\ldots, X_n\in M_\sa$ and noncommutative polynomials $P_1,\ldots, P_k\in \CI\langle X_1,\ldots, X_n\rangle$  set
\begin{align*}
\G(X_1,\ldots, X_n,P_1,\ldots, P_k,\delta,r)&:=\big\{(A_1,\ldots, A_n)\in (\M_r^\sa)^n\mid\\
 &\hspace{-2cm}|\tau(P_i(X_1,\ldots, X_n))-\tr(P_i(A_1,\ldots, A_n))|\leq \delta,\ \forall i=1\ldots k \}
\end{align*}
where $n,k,r\in \IN$, $\delta>0$ and $\tr$ is normalized trace on the $r\times r$ complex  matrices. 

Using the $\G(X_1,\ldots, X_n,P_1,\ldots, P_k,\delta,r)$ microstate spaces Voiculescu defines the free entropy $\chi(X_1,\ldots, X_n)$,  the free entropy dimension $\delta(X_1,\ldots, X_n)$ and  a technical modification $\delta_0(X_1,\ldots, X_n)$ of $\delta$ which turns out to be an algebraic invariant \cite{Voi}. 

In \cite{Jun1}  Jung proves that  $\delta_0(X_1,\ldots, X_n)$ can be computed as an asymptotic packing number
\[
\delta_0(X_1,\ldots, X_n)=\varlimsup_{\e\to 0}\inf_{\delta>0}\inf_{P_1,\ldots, P_k\in \CI\langle X_1,\ldots, X_n\rangle}\lim_{k\to\omega}\frac{N_\e(\G(X_1,\ldots, X_n,P_1,\ldots, P_k,\delta,r))}{r^2|\log\e|}.
\]
In our formulation we let:

\begin{itemize}
\item $\sF=\{0,1,$ addition, $x\mapsto -x$, multiplication, adjoint, scalar multiplication  $\lambda\cdot$ for any $\lambda\in \CI\}$ 
\item $\sG=\{\tau\}$ (or simply $\Re\tau$ and $\Im\tau$ for real valued tracial states)
\item $N(r)=r^2$ and $L(\e)=|\log \e|$
\end{itemize}
Since $\|x\|_2^2=\tau(xx^*)$ in $M$ we may equivalently include $\|\cdot\|_2$ in $\sG$.   

Assume that $\sD$ contains a set $D_t$ for every $t>0$ interpreted as the ball of radius $t$ in $M$ with respect to the uniform norm and let $d$ be the 2-distance on $M$ associated with $\|\ \|_2$. This gives a uniformly continuous algebraic structure on $M$. This is the framework in which Farah, Hart and Sherman show that one can axiomatize tracial von Neumann algebras \cite{FHS}. We  interpret $D_t$ as the uniform ball of radius $t$ in the algebra $\M_r$ of $r\times r$ matrices, which we also endow with the 2-distance then, by Theorem \ref{T-algebraic gen}, the resulting dimension ``$\dim$''  is an algebraic invariant:

\begin{corollary}
If $G\subset M$ is a subset and $A$ denotes the $*$-subalgebra of $M$ generated by $G$ then $\dim(A)=\dim(G)$.
\end{corollary}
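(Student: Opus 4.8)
The plan is to read off the corollary from Theorem~\ref{T-algebraic gen} applied to $G_1 = G$ and $G_2 = A$. Two points need verification: that the algebraic hypothesis $G_1^\sT = G_2^\sT$ holds, and that the packing normalization $L = |\log|$ fixed in \textsection\ref{S-voi} satisfies the limit condition required by the theorem for the moduli of continuity of the von Neumann algebra structure. The first point is essentially a matter of unwinding definitions: with $\sF = \{0,1,+,-,\text{mult},*,\lambda\cdot\ (\lambda\in\CI)\}$ the term closure $G^\sT$ is exactly the unital $*$-subalgebra of $M$ generated by $G$, i.e.\ $G^\sT = A$. Since $A$ is already closed under every operation of $\sF$, no new elements are produced by applying terms to $A$, so $A^\sT = A$ as well. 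Hence $G^\sT = A = A^\sT$ and the hypothesis of Theorem~\ref{T-algebraic gen} is met.

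The substantive step is to check the asymptotic condition
\[
\lim_{\e\to0}\frac{L(\eta(c\e)/2)}{L(\e)}=1
\]
for $L=|\log|$, every $\eta\in\sU$ and every $c\in(0,1)$. The idea is that on each bounded domain $D_t$ (the uniform ball of radius $t$) all the generating operations are Lipschitz for the $2$-distance: addition is $1$-Lipschitz in each variable, the adjoint is isometric, scalar multiplication by $\lambda$ scales the distance by $|\lambda|$, and, using $\|xy\|_2\le\|x\|\,\|y\|_2$ together with $\|xy\|_2\le\|x\|_2\,\|y\|$, multiplication satisfies $\|ab-a'b'\|_2\le t\bigl(\|a-a'\|_2+\|b-b'\|_2\bigr)$ on $D_t\times D_t$. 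Being finite iterations of such maps, every term $t\in\sT$ is therefore Lipschitz in each argument on any product of domains, so its modulus of continuity may be taken linear, $\eta_t^{D_1,\ldots,D_n}(\e)=\e/K_t$ for a finite constant $K_t>0$. A finite minimum of linear functions is linear, so each $\eta\in\sU$ has the form $\eta(\e)=K\e$. Consequently $|\log(\eta(c\e)/2)|=|\log\e|-\log(Kc/2)$ for $\e$ small, and dividing by $|\log\e|$ and letting $\e\to0$ gives the limit $1$. With both hypotheses verified, Theorem~\ref{T-algebraic gen} yields $\dim(G)=\dim(A)$.

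I expect the only delicate point to be the Lipschitz analysis of multiplication and its iterates. One must confirm that the uniform-norm bound coming from membership in $D_t$ controls the constant $K_t$ in a way that stays finite for arbitrarily deep terms; this is precisely where the boundedness and completeness of the domains $\sD$, and the domain-preserving requirement on microstates, enter. Once the moduli in $\sU$ are known to be linear, the logarithmic normalization $L=|\log|$ absorbs the multiplicative constant into a bounded additive error, which is exactly the behaviour Theorem~\ref{T-algebraic gen} demands.
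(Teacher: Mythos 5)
Your proposal is correct and follows essentially the same route as the paper: the corollary is intended as an immediate application of Theorem~\ref{T-algebraic gen} with $G_1=G$, $G_2=A$, once one notes $G^\sT=A^\sT$ and that the operations and the trace of a tracial von Neumann algebra are Lipschitz for the $2$-distance on the uniform-norm balls $D_t$, so the moduli in $\sU$ can be taken linear and $L=|\log|$ satisfies the required limit condition. The paper leaves these verifications implicit; your write-up supplies them, correctly isolating the one delicate point (boundedness of the domains keeps the Lipschitz constants of iterated terms finite, which is where the domain-preserving requirement enters).
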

There are minor differences between $\dim$ and $\delta_0$, including the fact that $G$ may be an arbitrary subset of $M$ (not necessarily self-adjoint) and that we do not assume that our microstates are mapping $G$ to self-adjoint matrices. If one starts from a self-adjoint tuple $G\subset M_\sa$ then the range of the microstates in $\MS$ will be approximately self-adjoint so taking real parts projects $G$ back to self-adjoints matrices and creates $\G$ microstates.   The main difference between $\dim$ and $\delta_0$ is the domain preserving condition.

It follows from the Haagerup--Thorbj\o rnsen theorem in \cite{HT} that if $X_1,\ldots, X_n$ is a free semicircular family then $\dim(X_1,\ldots, X_n)=n$ (by normalizing the microstates with respect to the uniform norm to obtain the domain preserving condition on $F\ll A$). Therefore, if ``$\dim$'' can be shown to be a topological invariant, then the free group factors $LF_n$ are not isomorphic. Note that $\h(X_1,\ldots, X_n)=\h(LF_n)=\infty$.   (One can also renormalize $h(E,B,\e)$ additively to recover Voiculescu's free entropy $\chi$.)

In \cite{HP} Hiai and Petz introduce a unitary version of Voiculescu's free entropy. This fits in our  framework with the obvious $\sF,\sG$ and $\sD$ containing a unique set interpreted as the unitary group itself,  $d$ being associated with the 2-norm. The same Boltzmann normalization $N(r)=r^2$ and packing normalization $L(\e)=|\log \e|$ apply. The associated  ``$\dim$'' coincides with the Hiai--Petz modified   free entropy unitary dimension $\delta_{0,u}(u_1,\ldots,u_n)$ for unitaries $u_1,\ldots, u_n\in M$.

\subsection{Kolmogorov--Sinai entropy and Bowen's sofic entropy}\label{S - KS entropy} 
Let $(X,\sB,\mu)$ be a standard probability space. We view finite  partitions  $\sP\subset \sB$ as measurable maps $\sP\colon X\to A$, where $A$ is a finite set, via $\sP_a:=\sP^{-1}(a)\subset X$.  Recall that the  usual Boltzmann  formula for  the entropy of $\sP$ 
 \[
H(\sP)=-\sum_{a\in A} \mu(\sP_a)\log \mu(\sP_a).
\]
This is established as follows. Let $\Map(\{1,\ldots, r\}, A)$ be the set of all maps $\{1,\ldots, r\}\to A$  and consider the microstate space (we follow the notation in \cite{Voisurv})
\[
\G(\sP, \delta, r):=\{\sQ \in \Map(\{1,\ldots, r\}, A),\ \sum_{a\in A} |\mu(\sP_a)-\mu_r(\sQ_a)|<\delta\}
\]
where $\{1,\ldots, r\}$ is endowed with the uniform probability measure $\mu_r$. Then
\[
H(\sP):=\inf_{\delta>0}\lim_{r\to\infty} \frac{\log |\G(\sP,\delta,r)|}{r}=-\sum_{a\in A} \mu(\sP_a)\log \mu(\sP_a).
\]
as 
\[
\log |\G(\sP,\delta,r)|\approx r\log r-\sum_{a\in A} r\mu(\sP_a)\log r \mu(\sP_a).
\]
From our point of view we have the $\sigma$-algebra $\sB$ with symbols  
\begin{itemize}
\item $\sF=\{0, 1, ^c,\cup,\cap\}$ where 0 is interpreted as $\emptyset$ and 1 as $X$
\item $\sG:=\{\mu\}$
\item $N(r)=r$ and $L(\e)=1$.
\end{itemize} 
The metric $d$ is interpreted as
\[
d(P,Q)=\mu(P\bigtriangleup Q)
\]
which defines a Polish metric on the measure algebra $L_\mu$ associated with $(X,\sB,\mu)$ (recall that $L_\mu$ is the space of measurable subsets of $X$ considered up to a.s.\ equality). The space is $X$ is approximated by  finite probability spaces $(\{1,\ldots, r\},\mu_r)$ with  $\mu_r$  the normalized counting measure  on $\{1,\ldots, r\}$, and the models for $L_\mu$ are the measure algebras $L_r= 2^{X_r}$ of $(X_r, 2^{X_r},\mu_r)$.

The entropy $h(G)$ of a (say, finite) subset $G\subset L_\mu$  defined using $\MS(G,T,\delta, L_r)$ as  in \textsection\ref{S-def} then satisfies
\[
h(G)=h(G^\sT)=h(\sP)=H(\sP)
\]
by the algebraic invariance theorem, where $\sP$ is the (finite) partition of $X$ generated by $G$ in $L_\mu$ and the last equality is an easy computation.

Lewis Bowen's sofic entropy for dynamical systems is  defined as follows, see \cite{Bowensofentropy}.  The join  $\sP\vee\sQ$ of two partitions $\sP\colon X\to A$ and $\sQ\colon X\to B$ is defined by $(\sP\vee\sQ)(x):=(\sP(x),\sQ(x))$ with values in $A\times B$. If $F\subset \Aut(X,\mu)$ is a finite set of probability measure preserving transformations then set
\[
\sP^F:=\bigvee_{s\in F} s(\sP).
\]
This is a finite partition  $X\to A^F$. If $\sP\colon X\to A$ is a finite partition, $F\subset \Aut(X,\mu)$ and $\sigma\colon F\to \Sym(r)$ is a map then the Bowen microstate space
\[
\AP_\sigma(\sP,F, \delta, r):=\{\sQ \in \Map(\{1,\ldots, r\}, A),\ \sum_{a\in A^F} |\mu(\sP^{F}_a)-\mu_r(\sQ^{\sigma(F)}_a)|<\delta\}
\]
 is the set of \emph{approximating partitions}. 

Let $\G$ be a sofic group (we review the definition in \textsection\ref{S-Sofic} below) with fixed sofic approximation $\Sigma=\{\sigma_i\colon \G\to \Sym(r_i)\}$ where  $\sigma_i$ is asymptotically multiplicative and asymptotically free on $F_i\subset G$ with $\bigcup_i^{\uparrow}F_i=\G$ (for definiteness  $\sigma_i$ is extended trivially to $\G$).

\begin{definition}[L. Bowen]
The sofic entropy of a finite partition $\sP\subset \sB$ is 
\[
\h_\mathrm{Bowen}(\sP)=\inf_{F\subset \G}\inf_{\delta>0}\varlimsup_{i\to\infty}\frac{\log |\AP_{\sigma_i}(\sP,F, \delta, r_i)|}{r_i}.
\]
(this depends on $\Sigma$ a priori).
\end{definition}

Bowen shows that his definition coincides with the classical Kolmogorov entropy for actions of  $\ZI$ and, more generally, for actions of amenable groups. He also
  extends it to infinite partitions with finite $H$ entropy.

From our point of view, consider again the measure algebra $L_\mu$ (with its Polish metric) and simply add $\G$ as unary function symbols:

\begin{itemize}
\item $\sF_\mathrm{dyn}:=\{0, 1, ^c,\cup,\cap\}\cup \G$ where the ``non dynamical'' $\sF$  defined above corresponds to $\G=\{e\}$.
\end{itemize}
The  function symbol $s\in \G$ is interpreted in $L_\mu$ as  the unary  operation $P\mapsto s(P)$ on $L_\mu$ for every $P\in L_\mu$.  The other symbols remain unchanged and so do their interpretations.

The asymptotic models for $L_\mu$, once a sofic approximation $\Sigma$ for $\G$ is chosen,  are the measure algebras  $L_{r_i}$ of  $(\{1,\ldots, r_i\},\mu_{r_i})$ with $s\in \G$ interpreted as the unary operation  $P\mapsto \sigma_i(s)(P)$. Using the microstate space $\MS(G,T,\delta ,L_{r_i})$ as in \textsection \ref{S-def} we have an entropy invariant $h(G)$ associated with an arbitrary subset $G\subset L_\mu$ (which depends on $\Sigma$). It is clear from the definition  that 
\[
h(\sP)=h_\mathrm{Bowen}(\sP)
\] 
when $\sP$ is a finite partition of $X$.

The following result \cite[Theorem 1.1]{Bowensofentropy} shows that $h_{\mathrm{Bowen}}$ is a invariant of conjugacy for group actions. This is a vast generalization of several earlier results including the classical Kolmogorov theorem which concerns actions of $\ZI$.

\begin{theorem}[Bowen] 
Let $\Sigma$ be a fixed sofic approximation of $\G$ and consider a pmp action $\G\acts (X,\sB,\mu)$.  Let $\sP,\sQ$ be dynamically generating partitions in $\sB$. Then $\h_\mathrm{Bowen}(\sP)=\h_\mathrm{Bowen}(\sQ)$.
\end{theorem}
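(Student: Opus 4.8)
The plan is to obtain Bowen's theorem as a direct consequence of the universal topological invariance result, Theorem \ref{T - top independence theorem}, using the dictionary set up above between the dynamical framework and the measure algebra $L_\mu$ equipped with the structure $\sF_\mathrm{dyn}$. The two facts that drive the argument are the identity $h(\sP)=h_\mathrm{Bowen}(\sP)$ on finite partitions, recorded just above, and the claim that a partition is \emph{dynamically} generating exactly when it is a \emph{topological} generating set of $L_\mu$ in the sense of Definition \ref{D- top gen}. Granting these, if $\sP$ and $\sQ$ both dynamically generate $\sB$ then both are topological generating sets of $L_\mu$, so Theorem \ref{T - top independence theorem} yields $h(\sP)=h(\sQ)$, and the identity converts this into $h_\mathrm{Bowen}(\sP)=h_\mathrm{Bowen}(\sQ)$.

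First I would establish the generating dictionary. Writing $\sP=\{\sP_a:a\in A\}\subset L_\mu$, the term closure $\sP^\sT$ under $\sF_\mathrm{dyn}$ is precisely the Boolean subalgebra of $L_\mu$ generated by the orbit $\{s(\sP_a):s\in\G,\ a\in A\}$, since each $s\in\G$ acts as a Boolean automorphism and the remaining symbols generate Boolean combinations. By definition $\sP$ is dynamically generating iff this orbit generates $\sB$ modulo null sets; and a family generates $\sB$ mod null sets iff the Boolean algebra it generates is dense in $(L_\mu,d)$ with $d(P,Q)=\mu(P\triangle Q)$. Hence dynamical generation is the same as density of $\sP^\sT$ in $L_\mu$. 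Since $L_\mu$ itself serves as the single bounded complete domain, the domain clause of Definition \ref{D- top gen} is automatic, so density is exactly topological generation.

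The step that needs care, and which I expect to be the main technical point, is confirming that we lie in the regime where Theorem \ref{T - top independence theorem} applies: the microstates must be taken contractive, or the metric must be controlled by the structure. Although $d\notin\sG$ literally (we have only $\sG=\{\mu\}$), the metric is recovered from $\mu$ and the Boolean term $\triangle$ via $d(P,Q)=\mu(P\triangle Q)$. Consequently any $(R,\delta)$-microstate $\sigma$ whose relation set $R$ contains $\mu$ and $\triangle$ is automatically metric preserving up to $2\delta$: the $\delta$-morphism property places $\mu_{r}(\sigma(P)\triangle\sigma(Q))$ within $\delta$ of $\mu_{r}(\sigma(P\triangle Q))$, and the $\delta$-state property places the latter within $\delta$ of $\mu(P\triangle Q)=d(P,Q)$, whence $|d(\sigma(P),\sigma(Q))-d(P,Q)|\leq 2\delta$. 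Thus such $\sigma$ is $2\delta$-contractive, and since $\h$ is defined with an infimum over $\delta$ and over finite $R\ll\sT\cup\sG$, enlarging $R$ to contain $\triangle$ costs nothing and renders $\MS$ and $\CMS$ interchangeable in this setting.

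Combining the three ingredients gives the theorem; the only remaining verifications are the standard measure-algebra density fact and the elementary bookkeeping of the previous paragraph, neither of which disturbs the value of $\e$.
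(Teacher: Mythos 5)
Your route is exactly the one the paper itself gestures at: the paper does not reprove Bowen's theorem (it cites \cite[Theorem 1.1]{Bowensofentropy}), but the surrounding text says precisely that Theorem \ref{T - top independence theorem}, combined with the identity $h(\sP)=h_\mathrm{Bowen}(\sP)$ and the correspondence between dynamical generators in $\sB$ and topological generators of $L_\mu$, recovers the statement. Your dictionary is correct on both counts: the term closure of $\sP$ under $\sF_\mathrm{dyn}$ is the Boolean subalgebra of $L_\mu$ generated by the $\G$-orbit of $\sP$ (the group symbols commute with the Boolean symbols at the level of interpretations), and generation of $\sB$ mod null sets is equivalent to density of that subalgebra in $(L_\mu,d)$, with the domain clause vacuous since $L_\mu$ is a single bounded complete domain. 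You also correctly identify the point the paper glosses over: Theorem \ref{T - top independence theorem} requires contractive microstates or $d\in\sG$, whereas the sofic-entropy setup has $\sG=\{\mu\}$ and works with $\MS$.

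However, your resolution of that point is imprecise as stated, and the imprecision matters. The $\delta$-morphism condition applies only to tuples $a_1,\ldots,a_n,a_{n+1}\in F$ with $a_{n+1}=t(a_1,\ldots,a_n)$; in particular, the estimate ``$\mu_r(\sigma(P)\triangle\sigma(Q))$ is within $\delta$ of $\mu_r(\sigma(P\triangle Q))$'' presupposes $P\triangle Q\in F$, so that $\sigma(P\triangle Q)$ is even defined. A microstate on an arbitrary finite $F$ with $R\supset\{\triangle,\mu\}$ is \emph{not} automatically $2\delta$-contractive; enlarging $R$ alone does not suffice. The repair is routine but must be made explicit: enlarge $F$ to $F'=F\cup\{a\triangle b:\ a,b\in F\}$ (still finite, and still contained in $G^\sT$ because $\triangle$ is a term in $\{{}^c,\cup,\cap\}$), take $R'=R\cup\{\triangle,\mu\}$, and note that every $\sigma\in\MS(F',R',\delta,L_{r_i})$ restricts to an element of $\MS(F,R,\delta,L_{r_i})$ that is $2\delta$-contractive on $F$, i.e.\ lies in $\CMS(F,R,2\delta,L_{r_i})$; restriction preserves $d_E$-separation for $E\subset F$, so $N_{E,\e}(\MS(F',R',\delta,L_{r_i}))\leq N_{E,\e}(\CMS(F,R,2\delta,L_{r_i}))$. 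Combining this with the trivial inclusion $\CMS\subset\MS$ and the infima over $F$, $R$, $\delta$ in the definition of $\h(E,B,\e)$ shows the $\MS$- and $\CMS$-entropies coincide here, at the same value of $\e$. With that correction your argument goes through and agrees with the paper's intended derivation.
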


Theorem \ref{T - top independence theorem} also establishes that $\h(\sP)=\h(\sQ)$ for arbitrary topologically generating subsets $\sP,\sQ\subset L_\mu$,  where the notion of ``topological generators'' of $L_\mu$ (in the sense of \textsection\ref{S - top invariance}) correspond to the classical notion of ``dynamical generators'' in $\sB$ given two  partitions $\sP,\sQ\subset \sB$.

\subsection{The sofic dimension and hyperlinearity}\label{S-Sofic} The notion of sofic group was introduced by Gromov and Weiss. In the terminology of \textsection \ref{S-def}:

\begin{definition}A sofic group is a group which is $\Sym(r)$-approximable (that is it admits the symmetric groups $\Sym(r)$ as asymptotic models). 
\end{definition}

Here $\sF=\{1,*\}$ and $\sG=\{\tau\}$ is interpreted as the (von Neumann) tracial state on the group $\G$ 
\[
\tau(s)=\begin{cases} 1 \text{ if } s=e\\ 0\text{ otherwise} \end{cases}
\]   
It is interpreted  as the normalized number of fixed points on $\Sym(r)$
\[
\tau(\sigma)=\mu_r\{x,\ \sigma(x)=x\}
\] 
 $\mu_r$ denotes the normalized counting measure  on $\{1,\ldots, r\}$.

The group $\G$ is has the discrete metric and $\Sym(r)$ has the Hamming metric  
\[
d_r(\sigma,\sigma')=\mu_r\{x,\ \sigma(x)\neq \sigma'(x)\}=1-\tau(\sigma\sigma'^{-1}),
\]
 which is  also often called the uniform metric when it is  viewed as a metric on $\Aut(\{1,\ldots, r\},\mu)=\Sym(r)$. There is one domain $\sD=\{\G\}$. 

The associated entropy invariant $h(G)$ with normalization 

\begin{itemize}
\item $N(r)=r\log r$ and $L(\e)\equiv 1$
\end{itemize}
coincide by definition with the ``sofic dimension'' $s(G)$ of $\G$  (see \cite{DKP1,DKP2}). The definition can be extended to pmp measure equivalence relations and pmp groupoids. Theorem \ref{T - top independence theorem} then implies \cite[Theorem 4.1]{DKP1} and \cite[Theorem 2.11]{DKP2}.

For example, if $F_n$ denotes the free group on $n= 1,2,\ldots, \infty$ letters then the sofic dimension satisfies $s(F_n)=n$ while if $\G$ is infinite amenable or a direct product of infinite groups then $s(\G)=1$. 

\begin{remark} If $\G\acts X$ is a free pmp action  of a countable group $\G$, then Bowen's sofic entropy is a conjugacy invariant for the action $\G\acts X$ for every sofic approximation of $\G$ (namely, this is an invariant of the space $X$ itself relative to the group  $\G$), while the sofic dimension will rather concern the orbit structure of the action $\G\acts X$ namely the orbit equivalence relation of the action
\[
R=\G\ltimes X=\bigcup_{s\in \G,\ x\in X}(x,sx)\subset X\times X
\] 
In the latter case ``topological invariance'' (which combines both the topology and the algebra structure/multiplication $(x,y)(y,z)=(x,z)$ on the orbit structure $R$ itself) creates an invariant  which ``ignores'' the group $\G$. 
\end{remark}

Similarly, a group $\G$ is said to be hyperlinear if it is $U(r)$-approximable, writing $U(r)$ for the unitary group (equivalently the group von Neumann algebra is embeddable in the ultrapower $R^\omega$ of the hyperfinite $\mathrm{II}_1$ factor $R$). The symbols $\sF$ and $\sG$ are the same and $\tau$ is interpreted in $U(r)$ as the normalized trace. Note that the restriction of $\tau$ to $\Sym(r)\subset U(r)$ (permutation matrices) coincides with the sofic interpretation of $\tau$. The associated entropy dimension invariant $\dim$ has normalization  
\begin{itemize}
\item $N(r)=r^2$ and $L(\e)= |\log \e|$
\end{itemize}
and it coincides with the Hiai-Petz invariant $\delta_{0,u}$. In this case $\dim$ is only known to be an algebraic invariant.

One can define relative versions of Voiculescu's free entropy dimension in the same way one recovers Bowen's entropy from the Boltzman entropy of $(X,\sB,\mu)$ by adding group element symbols rending the situation equivariant. 
Namely, let $M$ be a von Neumann algebra with tracial state $\tau$ and let $\G$ be a subgroup of $\Aut(M)$. Assume that $\G$ is hyperlinear and fix a $\U(r_k)$-approximation $\Sigma:=\{\sigma_k\colon \G\to \U(r_k) \}$. The relative microstate spaces $\MS(F,R,\delta,\M_{r_k})$ are defined as the microstate spaces for the algebraic structure with function symbols $\sF_\mathrm{dyn}:=\sF\cup \G$ where every $s\in \G$ acts as a unary function by automorphism on $M$ and by conjugacy by $\sigma_k(s)$ on $\M_{r_k}$. The corresponding entropy dimensions $\dim_\G(G)$ for $G\subset M$ coincides with $\dim(G)$ as defined in \textsection \ref{S-voi} for $\G=\{e\}$. The normalization is again $N(r_k)=r_k^2$ and $L(\e):=|\log\e|$. 

If $M=L^\infty(X)$ then one will preferably choose $\CI^r$ (with the usual Hermitian structure) as asymptotic models, and for every hyperlinear group $\G$ one can define an entropy invariant  using  $\MS(F,T,\delta,\CI^{r_k})$ associated with $\Sigma=\{\sigma_k\colon \G\to \U(r_k) \}$. In this case $s\in \G\subset \sF_\mathrm{dyn}$ is interpreted using the natural action of $\G$ on $L^\infty(X)$ and the standard action of $\U(r_k)$ on $\CI^{r_k}$. This now uses the Boltzmann normalization $N(r_k)=r_k$ and trivial packing normalization $L(\e):=1$. If $\sigma_k\colon \G\to\Sym(r_k)\inj \U(r_k)$
factorizes through a sofic approximation of $\G$ then it is easy to see that this ``hyperlinear entropy'' invariant (also a topological invariant) coincides with Bowen's sofic entropy.

\subsection{Topological entropy}

In \cite{Voitop} Voiculescu considers a topological version of free entropy for $C^*$-algebras using norm microstates.  In this case the $C^*$-norm only is involved and the resulting ``$\dim$'' defined using 
\begin{itemize} 
\item $\sF=\{0,1,$ addition, $x\mapsto -x$, multiplication, adjoint, scalar multiplication  $\lambda\cdot$ for any $\lambda\in \CI\}$ 
\item $\sG=\{d\}$
\item $N(r)=r^2$ and $L(\e)= |\log\e|$.
\end{itemize}
coincides  with Voiculescu's $\delta_\mathrm{top}$. 

In \cite{KL} Kerr and Li have introduced a topological version of (Lewis) Bowen's sofic entropy and established the variational principle in that generality.  Their invariant generalizes  the classical (``orbit dispersion'') definition of (Rufus) Bowen and Dinaburg. It is easy to see that this fits in our framework as well. Namely, if $\G\acts X$ is continuous with $X$ compact, the associated algebraic structure $A$ is simply $C(X)$ viewed as a $*$-algebra with metric $d$ interpreted as the $C^*$-norm:
\begin{itemize} 
\item $\sF_\mathrm{dyn}=\{0,1,$ addition, $x\mapsto -x$, multiplication, adjoint, scalar multiplication  $\lambda\cdot$ for any $\lambda\in \CI\}\cup \G$ 
\item $\sG=\{d\}$
\item $N(r)=r$ and $L(\e)\equiv 1$.
\end{itemize}
where the group elements are interpreted through the sofic approximation as in Bowen's sofic entropy.  One can also define a relative notion of entropy for Voiculescu's topological free entropy replacing $\sF$ with  $\sF_{\mathrm{dyn}}$  using  $N(r)=r^2$ and $L(\e)= |\log\e|$.

\end{document}